\documentclass{amsart}

\usepackage{amssymb,amsmath,amsthm,amscd}
\usepackage{tikz}
\usepackage{amscd}
\usepackage{mathrsfs}
\usepackage{color}
\usepackage[all]{xy}
\usepackage{comment}

\theoremstyle{definition}
\newtheorem{thm}{Theorem}[section]

\newtheorem{prp}[thm]{Proposition}
\newtheorem{dfn}[thm]{Definition}
\newtheorem{cor}[thm]{Corollary}

\newtheorem{rmk}[thm]{Remark}

\newtheorem{eg}[thm]{Example}

\theoremstyle{case}

\title[Compact and Schatten-Class Perturbations of Partial Isometries]{Spectral Characterizations of Compact and Schatten-Class Perturbations of Partial Isometries}

\author[Neeru Bala]{Neeru Bala}
\address{Department of Mathematics and Computing\\ Indian Institute of Technology (ISM) Dhanbad\\ Dhanbad, Jharkhand\\ India, 826 004.}
\email[]{neerubala@iitism.ac.in}

\author[Ramesh Golla]{Ramesh Golla}
\address{Department of Mathematics\\Indian Institute of Technology Hyderabad, Kandi\\ Sangareddy, Telangana \\ India, 502 284.}
\email{rameshg@math.iith.ac.in}

\date{\today}

\keywords{Partial isometry, essential spectrum, Moore--Penrose inverse, absolutely norm attaining operator, Schatten-class}

\subjclass[2020]{Primary 47A55, 47A10; Secondary 47B10, 47B07} 

\begin{document}
\begin{abstract}
 We characterize bounded operators that are compact (respectively, Schatten-class) perturbations of scalar multiples of partial isometries with finite-dimensional kernel. Our characterizations are formulated in terms of the essential spectrum of $T^*T$, absolutely norm attaining operators, and the Moore--Penrose inverse. In particular, we show that an operator $T$ is a Schatten-class perturbation of a partial isometry with finite-dimensional kernel if and only if $\sigma_{\mathrm{ess}}(T^*T)$ is a singleton and the discrete spectrum of $T^*T$ satisfies a corresponding $\ell^p$-summability condition. We further obtain equivalent criteria involving the compactness (or Schatten-class membership) of $\alpha I-T^*T$ and $\alpha T^\dagger-T^*$.

As applications, we establish characterizations of compact and Schatten-class perturbations of isometries, describe the corresponding behavior of Moore--Penrose inverses, and derive factorization results for closed-range operators. In particular, we provide a new Moore--Penrose inverse proof of a theorem of Şerban and Turcu and obtain an explicit formula for the factorizing operator.

\end{abstract}

\maketitle

\section{Introduction and Preliminaries}

Partial isometries and their perturbations play an important role in operator theory and arise naturally in Fredholm theory, dilation theory, and the study of essentially isometric operators. Consequently, compact perturbations of partial isometries have been extensively investigated from both spectral and geometric viewpoints (see \cite{SerbanTurcu,IoanaTurcu,dassarkar} and the references therein). More recently, these investigations have revealed close connections with absolutely norm attaining operators and absolutely minimum attaining operators \cite{GRSSS1,ANHISP}.

A bounded operator $T\in\mathcal B(H)$ is called \emph{absolutely norm attaining} if, for every non-zero closed subspace $M$ of $H$, the restriction $T|_M$ attains its norm. We refer the reader to \cite{CarvajalNeves1,PandePaulsen,GRVN,GRpara,ANHISP,GRSSS1} for background on this class of operators. A fundamental result of \cite{GRSSS1} shows that the operator norm closure of $\mathcal {AN}(H)$ coincides with the class of operators whose modulus has singleton essential spectrum. Since operators with singleton essential spectrum are precisely compact perturbations of scalar multiples of partial isometries with finite-dimensional kernel (\cite[Theorem 4.13]{ANHISP}), this establishes a close relationship between absolutely norm attaining operators and compact perturbations of partial isometries. It is therefore natural to ask whether analogous characterizations exist for Schatten-class perturbations.

The primary objective of this paper is to answer this question. We establish a complete spectral characterization of Schatten-class perturbations of scalar multiples of partial isometries with finite-dimensional kernel. More precisely, we prove that an operator $T\in\mathcal B(H)$ is a Schatten $p$-class ($1\leq p<\infty$) perturbation of a scalar multiple of a partial isometry with finite-dimensional kernel if and only if the essential spectrum of $T^{*}T$ consists of a single point and the corresponding discrete spectrum satisfies a natural $\ell^p$-summability condition. Thus, a Schatten-class perturbation of a scalar multiple of a partial isometry can be recognized entirely from the spectral behaviour of $T^{*}T$, providing a natural analogue of the corresponding characterization for compact perturbations.

The main theorem admits several equivalent operator-theoretic formulations. In particular, we prove that the above spectral characterization is equivalent to each of the conditions
\[
\alpha I-T^{*}T\in\mathcal K_p(H), \qquad 1\le p<\infty,
\]
and
\[
\alpha T^\dagger-T^{*}\in\mathcal K_p(H), \qquad 1\le p<\infty,
\]
where $T^\dagger$ denotes the Moore--Penrose inverse of $T$. These equivalences provide new criteria for recognizing Schatten-class perturbations of partial isometries and establish direct connections among perturbation theory, spectral theory, generalized inverses, semi-Fredholm operators and absolutely norm attaining operators.

The characterization developed in this paper has several applications. We obtain analogous characterizations for compact and finite-rank perturbations of partial isometries and isometries, derive criteria involving semi-Fredholm operators and the Moore--Penrose inverse, and present a new proof of a factorization theorem of \cite{SerbanTurcu} together with an explicit representation of the factorizing operator. Furthermore, the main theorem shows that the spectral hypothesis appearing in our earlier work \cite{ANHISP} is equivalent to an operator being a Schatten-class perturbation of a scalar multiple of a partial isometry with finite-dimensional kernel. Consequently, every operator satisfying the equivalent conditions of the main theorem belongs to a class for which the existence of a non-trivial hyperinvariant subspace is already known.

The paper is organized as follows. In Section~2, we establish the characterization theorem together with several equivalent formulations and applications to compact and finite-rank perturbations. Section~3 is devoted to Moore--Penrose inverses, perturbations of isometries, and factorization results.

\subsection{Preliminaries}
Throughout the paper, $H$ denotes an infinite-dimensional complex separable Hilbert space and $\mathcal B(H)$ denotes the $C^{*}$-algebra of all bounded linear operators on $H$. For $T\in\mathcal B(H)$, we write $R(T)$ and $N(T)$ for the range and null space of $T$, respectively, and
\[
|T|=(T^{*}T)^{1/2}
\]
for the modulus of $T$. The spaces of finite-rank, compact and Schatten $p$-class ($1 \leq p<\infty$) operators are denoted by $\mathcal F(H)$, $\mathcal K(H)$ and $\mathcal K_p(H)$, respectively.

For $T\in\mathcal  B(H)$, we define the \textit{spectrum} of $T$ by $$\sigma(T):=\{\lambda \in \mathbb{C} : T-\lambda I \text{ is not invertible}\}.$$ It is a non-empty compact set in the complex plane. An operator $T\in\mathcal  B(H)$ is called \textit{Fredholm} if $R(T)$ is closed and $N(T)$, $N(T^*)$ are finite-dimensional. The \textit{essential spectrum} of $T$ is defined by $$\sigma_{\text{ess}}(T):=\{\mu \in \mathbb{C} : T-\mu I \text{ is not Fredholm}\}.$$ 

The quotient algebra $\mathcal  B(H)/\mathcal  K(H)$ is called the \textit{Calkin algebra} and we denote the canonical homomorphism from $\mathcal  B(H)$ onto $\mathcal  B(H)/\mathcal  K(H)$ by $\pi$. As a consequence of Atkinson's characterization for Fredholm operators, we have $\sigma_{\text{ess}}(T)=\sigma(\pi(T))$. Note that $\sigma_{\text{ess}}(T)$ is also a non-empty compact subset of $\sigma(T)$. If $T\in \mathcal B(H)$ is self-adjoint, then \cite[Theorem VII.11, Page 236]{reedsimon1}
\[
\sigma_{\mathrm{ess}}(T)
=
\sigma(T)\setminus
\{\lambda:\lambda \text{ is an isolated eigenvalue of finite multiplicity}\}.
\]
Equivalently, a point of $\sigma(T)$ belongs to $\sigma_{\mathrm{ess}}(T)$ if it is an accumulation point of $\sigma(T)$ or an eigenvalue of infinite multiplicity.

If $A \in \mathcal  B(H)$ is self-adjoint and $K\in \mathcal  K(H)$ is also self-adjoint, then by Weyl's theorem \cite[Corollary 8.16, Page 182]{Schmudgen}, we have
$$\sigma_{\text{ess}}(A+K)=\sigma_{\text{ess}}(A).$$ Also, we refer to  \cite[Theorem 2]{Kover} regarding Weyl's theorem.

 The singular values $s_n(A)$ of $A\in\mathcal K(H)$ are the eigenvalues of $|A|=(A^*A)^{1/2}$, listed in non-increasing order and repeated according to multiplicity. We say a compact operator $A$ is in the Schatten $p$-class $\mathcal K_p(H), (1 \leq p<\infty )$, if
 $\sum_{n=1}^{\infty} s_n(A)^p <\infty$.
 Note that $\mathcal F(H)\subseteq\mathcal K_p(H)\subseteq\mathcal K(H)$ for $1\leq p<\infty$.
For a self-adjoint operator $A$, we write $\sigma_d(A)$ for the set of isolated eigenvalues of finite multiplicity. Whenever an $\ell^p$-condition is imposed on the discrete eigenvalues, they are understood to be repeated according to multiplicity. We also use the essential minimum modulus
\[
m_e(T):=\inf\sigma_{\mathrm{ess}}(|T|).
\]

 The principal theorem of this paper establishes several equivalent
characterizations of Schatten $p$-class perturbations of scalar multiples of
partial isometries with finite-dimensional kernel. In particular, we show
that such perturbations can be characterized entirely in terms of the
spectral behaviour of $T^*T$. Furthermore, this spectral characterization is
equivalent to the operator-theoretic conditions
\[
\alpha I-T^*T\in\mathcal K_p(H)
\quad\text{and}\quad
\alpha T^\dagger-T^*\in\mathcal K_p(H)\;(1\leq p<\infty),
\]
thus revealing close connections between perturbation theory, spectral
theory, generalized inverses and absolutely norm attaining operators. The
precise statement of the main theorem is given in
Theorem~\ref{schattenpluspi}.
\section{Compact and Schatten-Class Perturbations of Partial Isometries}
In this section, we prove our main results. First, we make a few important observations regarding compact and Schatten $p$-class ($1\leq p<\infty$) perturbations of partial isometries with finite-dimensional kernel. These observations lead to several interesting consequences.
\begin{thm}\label{coptperturbpi}
Let $V\in\mathcal B(H)$ be a partial isometry with $\dim N(V)<\infty$, let $K\in\mathcal K(H)$, and let $\alpha>0$. If
\[
T=\alpha V+K,
\]
then $R(T)$ is closed and $N(T)$ is finite-dimensional.
\end{thm}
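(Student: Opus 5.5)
The plan is to show that $T$ is left semi-Fredholm by exhibiting a left inverse modulo compacts, and then invoke the standard fact that an operator which is invertible from the left in the Calkin algebra has closed range and finite-dimensional kernel. Since $V$ is a partial isometry, $V^*V=P$ is the orthogonal projection onto $N(V)^\perp$. Hence $V^*V=I-Q$, where $Q$ is the orthogonal projection onto $N(V)$. Because $\dim N(V)<\infty$, the projection $Q$ has finite rank.

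Set $S=\alpha^{-1}V^*$. Then
\[
ST=\alpha^{-1}V^*(\alpha V+K)=V^*V+\alpha^{-1}V^*K=I-Q+\alpha^{-1}V^*K .
\]
We therefore have $ST=I-C$ with $C=Q-\alpha^{-1}V^*K\in\mathcal K(H)$. This yields $\pi(S)\pi(T)=\pi(I)$, so $\pi(T)$ is left invertible in the Calkin algebra.

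It remains to pass from $ST=I-C$ to the two conclusions, which I will do directly by Riesz theory rather than citing Atkinson's theorem.

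\textbf{Kernel.} We have $N(T)\subseteq N(ST)=N(I-C)$. Since $C$ is compact, $N(I-C)$ is finite-dimensional, and hence so is $N(T)$.

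\textbf{Closed range.} Because $C$ is compact, $I-C$ has closed range. It is therefore bounded below on the closed subspace $M=N(I-C)^\perp$: there is $c>0$ with $\|(I-C)x\|\ge c\|x\|$ for $x\in M$. Consequently, for $x\in M$,
\[
\|Tx\|\ge \|S\|^{-1}\|STx\|\ge c\|S\|^{-1}\|x\|,
\]
so $T|_M$ is bounded below and $T(M)$ is closed. Finally,
\[
R(T)=T(M)+T\bigl(N(I-C)\bigr),
\]
which is the sum of a closed subspace and a finite-dimensional subspace. Such a sum is closed, so $R(T)$ is closed.

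The only mildly delicate point is this last closed-range step, and that is handled by the standard lemma that a closed subspace plus a finite-dimensional subspace is closed. Everything else is routine.
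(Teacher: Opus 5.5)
Your proof is correct. It shares the paper's core idea, that $T$ is left invertible modulo compacts, but it uses a different left multiplier and obtains closed range differently.

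\textbf{The paper's route.} The paper multiplies by $T^*$ and gets
\[
T^*T=\alpha^2I+\widetilde K,\qquad \widetilde K=K^*K+\alpha(K^*V+V^*K)-\alpha^2P_{N(V)},
\]
with $\widetilde K$ compact and self-adjoint. It concludes that $T^*T$ is Fredholm. Closedness of $R(T)$ then follows from the standard fact that $R(T^*T)$ closed implies $R(T)$ closed. Finite-dimensionality of $N(T)=N(T^*T)$ follows because the identity on $N(T)$ equals $-\alpha^{-2}\widetilde K$ restricted there.

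\textbf{Your route.} You multiply by $S=\alpha^{-1}V^*$ instead. Your kernel step is essentially the paper's, since $N(T)$ sits inside the kernel of identity-minus-compact. For closed range you argue by hand: $T$ is bounded below on $N(I-C)^\perp$, and you add a finite-dimensional piece. This amounts to reproving the easy direction of Atkinson's theorem.

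\textbf{What each buys.} Your argument is more self-contained and avoids the $R(T^*T)$-versus-$R(T)$ lemma. The paper's choice has a structural payoff: the identity $T^*T=\alpha^2I+\widetilde K$ is reused verbatim in Theorem~\ref{essschattenthm} and the Main Theorem. There, Weyl's theorem and the ideal property of $\mathcal K_p(H)$ are applied to $\widetilde K$, so that one computation does double duty.

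\textbf{A small point.} When you divide by $\|S\|$, you could note explicitly that $S\neq 0$. Otherwise $ST=I-C$ would force $I=C$ to be compact on the infinite-dimensional space $H$.
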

\begin{proof}
Since $V^*V=P_{N(V)^\perp}$,
\begin{align}
T^*T
&=K^*K+\alpha(K^*V+V^*K)+\alpha^2P_{N(V)^\perp} \nonumber\\
&=\alpha^2I+\widetilde K, \label{cptplusid}
\end{align}
where
\begin{equation}\label{tildacpt}
\widetilde K=K^*K+\alpha(K^*V+V^*K)-\alpha^2P_{N(V)}.
\end{equation}
Because $N(V)$ is finite-dimensional, $P_{N(V)}$ is finite-rank; hence $\widetilde K$ is compact and self-adjoint. Thus $T^*T=\alpha^2I+\widetilde K$ is Fredholm, so $R(T^*T)$, and therefore $R(T)$, is closed.

If $x\in N(T)=N(T^*T)$, then \eqref{cptplusid} gives $\alpha^2x=-\widetilde Kx$. Hence
\[
\alpha^2 I_{N(T)}=-\widetilde K|_{N(T)}.
\]
Thus the identity on $N(T)$ is compact, which is possible only when $N(T)$ is finite-dimensional.
\end{proof}
 \begin{rmk}\label{remark}
    Let $V$ be a partial isometry with finite-dimensional null space and $T=\alpha V+K$ for some compact operator $K$. Then $T=\alpha S+\tilde{K}$, where either $S$ is an isometry or a co-isometry and $\tilde{K}$ is a compact operator.
\end{rmk}
\begin{proof}
    If $\dim N(V)\leq \dim N(V^*)$, then there exist an isometry $W:N(V)\rightarrow N(V^*)$. Extend $W$ by zero on $N(V)^\perp$; then $W$ is a partial isometry on $H$. We can write $T=\alpha(V+W)+\tilde{K}$, where $V+W$ is an isometry and $\tilde{K}$ is a compact operator.

    On the other hand, if $\dim N(V^*)\leq\dim N(V)$, then there exist an isometry $\tilde{W}:N(V^*)\rightarrow N(V)$ and $T^*=\alpha (V^*+\tilde{W})+\hat{K},$ where $V^*+\tilde{W}$ is an isometry. Taking adjoints gives the required representation of $T$ as a compact perturbation of a co-isometry.
\end{proof}

\begin{rmk}
    By Theorem 3.1 of \cite{Brown}, we know that for $\alpha>0$ we have $\sigma_{\text{ess}}(T^*T)=\{\alpha^2\}=\sigma_{\mathrm{ess}}(TT^*)$ if and only if $T=\alpha S+K$, where $S$ is a unitary or an isometry  or a co-isometry with finite-dimensional null space.
\end{rmk}
The following characterization theorem plays a central role in the proofs of the main results.
\begin{thm}\label{ANcloschar}\cite[Theorem 4.13]{ANHISP}
Let $T \in \mathcal{B}(H)\setminus \mathcal{K}(H)$. Then the following statements are equivalent:

\begin{enumerate}
\item $\sigma_{\mathrm{ess}}(T^{*}T)=\{\alpha\}$ for some $\alpha>0$;

\item $T^{*}T \in \overline{\mathcal{AN}(H)}$;

\item There exist a partial isometry $V$ with $\dim N(V)<\infty$ and a compact operator
      $K\in \mathcal{K}(H)$ such that
      \[
      T = \sqrt{\alpha}\,V + K.
      \]
\end{enumerate}
\end{thm}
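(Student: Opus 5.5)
The plan is to prove the cycle (3)$\Rightarrow$(1)$\Rightarrow$(3) directly, and to obtain (1)$\Leftrightarrow$(2) from the cited characterization of $\overline{\mathcal{AN}(H)}$ in \cite{GRSSS1}. That result says a positive operator lies in $\overline{\mathcal{AN}(H)}$ if and only if its essential spectrum is a singleton. Applied to the positive operator $T^*T$, it gives (1)$\Leftrightarrow$(2) immediately, provided the singleton is nonzero. Here $\alpha>0$ because $T\notin\mathcal K(H)$: if $\sigma_{\mathrm{ess}}(T^*T)=\{0\}$, then $\pi(T^*T)=0$, so $T^*T$ is compact, hence $|T|$ is compact, hence $T=U|T|$ is compact, which is excluded.

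For (3)$\Rightarrow$(1), write $T=\sqrt\alpha V+K$. The computation in the proof of Theorem~\ref{coptperturbpi} gives $T^*T=\alpha I+\widetilde K$ with $\widetilde K$ compact and self-adjoint. Weyl's theorem then yields
\[
\sigma_{\mathrm{ess}}(T^*T)=\sigma_{\mathrm{ess}}(\alpha I)=\{\alpha\}.
\]

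For (1)$\Rightarrow$(3), the idea is a spectral decomposition of $T^*T$. Since $\sigma_{\mathrm{ess}}(T^*T)=\{\alpha\}$, the spectrum of $T^*T$ outside $\alpha$ consists of isolated eigenvalues of finite multiplicity accumulating only at $\alpha$. Consequently $T^*T-\alpha I$ is compact: it is diagonalizable off the $\alpha$-eigenspace, and its eigenvalues tend to $0$. Taking square roots via continuous functional calculus, $\pi(|T|)=\sqrt\alpha\,\pi(I)$, so $|T|-\sqrt\alpha I$ is compact. Let $T=U|T|$ be the polar decomposition, where $U$ is a partial isometry with $N(U)=N(T)$. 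Then
\[
T=\sqrt\alpha\,U+U(|T|-\sqrt\alpha I),
\]
and the second term is compact. It remains to check that $\dim N(U)=\dim N(T)<\infty$. Since $\alpha>0$, the point $0$ is not in $\sigma_{\mathrm{ess}}(T^*T)$, so $0$ is either in the resolvent set or an isolated eigenvalue of finite multiplicity. Hence $N(T)=N(T^*T)$ is finite-dimensional. Taking $V=U$ gives (3).

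The step I expect to need the most care is the passage from $\sigma_{\mathrm{ess}}(T^*T)=\{\alpha\}$ to compactness of $T^*T-\alpha I$. I would handle it in the Calkin algebra. The element $\pi(T^*T)$ is self-adjoint with spectrum $\{\alpha\}$, and a self-adjoint element of a $C^*$-algebra with singleton spectrum equals that scalar, since its norm equals its spectral radius. So $\pi(T^*T-\alpha I)=0$, which is exactly the required compactness. The same $C^*$-algebra argument gives $\pi(|T|)=\sqrt\alpha$ via $\sigma(\pi(|T|))=\{\sqrt\alpha\}$, by the spectral mapping theorem. This Calkin-algebra route avoids any explicit eigenvalue bookkeeping.
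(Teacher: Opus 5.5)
Your argument is correct. Since the paper only imports this statement from \cite[Theorem 4.13]{ANHISP}, the relevant comparison is with its proof of the Schatten-class analogue, Theorem~\ref{essschattenthm}, which uses the same ingredients you do: \eqref{cptplusid} plus Weyl's theorem for (3)$\Rightarrow$(1); polar decomposition together with Fredholmness of $T^*T$ (which gives $\dim N(T)<\infty$) for the converse; and \cite{GRSSS1} for the link to $\overline{\mathcal{AN}(H)}$.

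The only difference is how you obtain $|T|-\sqrt\alpha\, I\in\mathcal K(H)$. You use the Calkin algebra and functional calculus, whereas the paper writes $|T|-\sqrt\alpha\, I=(T^*T-\alpha I)(|T|+\sqrt\alpha\, I)^{-1}$. The paper's factorization has the advantage of working verbatim for any two-sided ideal such as $\mathcal K_p(H)$, where no $C^*$-quotient argument is available.
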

Next, we prove the corresponding result with compact operators replaced by Schatten $p$-class operators.
\begin{thm}\label{essschattenthm}
Let $T\in\mathcal B(H)\setminus\mathcal K(H)$, let $1\leq p<\infty$, and let $\alpha>0$. Then the following are equivalent:
\begin{enumerate}
\item\label{schattenclassperturb} There exist $K\in\mathcal K_p(H)$ and a partial isometry $V$ with $\dim N(V)<\infty$ such that
\[
T=\alpha V+K.
\]
\item\label{elpcondn} $\sigma_{\mathrm{ess}}(T^*T)=\{\alpha^2\}$ and, if $(\lambda_n)$ denotes the discrete eigenvalues of $T^*T$ different from $\alpha^2$, repeated according to multiplicity, then
\[
\sum_n |\lambda_n-\alpha^2|^p<\infty.
\]
\end{enumerate}
\end{thm}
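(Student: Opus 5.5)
The plan is to prove both implications by passing through the self-adjoint operator $T^*T-\alpha^2 I$. The direction (1)$\Rightarrow$(2) uses the computation in the proof of Theorem~\ref{coptperturbpi}. The direction (2)$\Rightarrow$(1) uses the polar decomposition of $T$ together with the functional calculus.

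\emph{(1)$\Rightarrow$(2).} Write $T=\alpha V+K$ with $K\in\mathcal K_p(H)$. By \eqref{cptplusid} and \eqref{tildacpt}, $T^*T=\alpha^2 I+\widetilde K$ with
\[
\widetilde K=K^*K+\alpha(K^*V+V^*K)-\alpha^2P_{N(V)}.
\]
Since $\mathcal K_p(H)$ is a two-sided ideal and $P_{N(V)}$ has finite rank, $\widetilde K$ is a self-adjoint element of $\mathcal K_p(H)$.

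Weyl's theorem then gives $\sigma_{\mathrm{ess}}(T^*T)=\sigma_{\mathrm{ess}}(\alpha^2I)=\{\alpha^2\}$. The discrete eigenvalues $\lambda_n\neq\alpha^2$ of $T^*T$, counted with multiplicity, correspond exactly to the nonzero eigenvalues $\lambda_n-\alpha^2$ of $\widetilde K$. For a self-adjoint compact operator, the absolute values of the nonzero eigenvalues are precisely its singular values. Hence $\sum_n|\lambda_n-\alpha^2|^p=\|\widetilde K\|_p^p<\infty$.

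\emph{(2)$\Rightarrow$(1).} Set $A=T^*T-\alpha^2 I$, which is self-adjoint with $\sigma_{\mathrm{ess}}(A)=\{0\}$.

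\begin{enumerate}
\item By the description of the essential spectrum of self-adjoint operators recalled in the preliminaries, $\sigma(A)\setminus\{0\}$ consists of isolated eigenvalues of finite multiplicity that can accumulate only at $0$. The spectral theorem then lets us write $A=\sum_n(\lambda_n-\alpha^2)P_n$ with finite-rank orthogonal spectral projections $P_n$, plus zero on the spectral subspace for $\{0\}$. So $A$ is compact and diagonalizable, and the hypothesis says exactly that $A\in\mathcal K_p(H)$.
\item Since $0\ne\alpha^2$, the point $0$ is either not in $\sigma(T^*T)$ or is an isolated eigenvalue of finite multiplicity. Hence $N(T)=N(T^*T)$ is finite-dimensional.
\item Take the polar decomposition $T=U|T|$, where $U$ is a partial isometry with $N(U)=N(T)$; thus $\dim N(U)<\infty$. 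Write
\[
T-\alpha U=U\bigl(|T|-\alpha I\bigr).
\]
\item The operator $|T|-\alpha I=f(T^*T)$ with $f(t)=\sqrt t-\alpha$ is diagonal in the same spectral decomposition. Its eigenvalues are $\sqrt{\lambda_n}-\alpha$, and it vanishes on the $\alpha^2$-eigenspace.
\item The elementary estimate $|\sqrt{\lambda}-\alpha|=|\lambda-\alpha^2|/(\sqrt\lambda+\alpha)\le|\lambda-\alpha^2|/\alpha$ shows these eigenvalues lie in $\ell^p$. Hence $|T|-\alpha I\in\mathcal K_p(H)$.
\item By the ideal property, $K:=U(|T|-\alpha I)\in\mathcal K_p(H)$, and $T=\alpha U+K$ is the required representation.
\end{enumerate}

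The only step needing care is the first one: deducing from $\sigma_{\mathrm{ess}}(A)=\{0\}$ that $A$ is compact and diagonalizable, so that its Schatten norm is computed from the discrete eigenvalues. This is a standard spectral-theorem argument: the spectral projection of $A$ for $\{|t|\ge\varepsilon\}$ has finite rank for every $\varepsilon>0$, so $A$ is a norm limit of finite-rank operators. The rest is bookkeeping with the ideal property of $\mathcal K_p(H)$.
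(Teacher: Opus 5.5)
Your proof is correct and follows the paper's argument. For (1)$\Rightarrow$(2) you use the same expansion $T^*T=\alpha^2I+\widetilde K$ with Weyl's theorem, and for the converse the same polar-decomposition argument through $A=T^*T-\alpha^2I\in\mathcal K_p(H)$. The only difference is cosmetic: you get $|T|-\alpha I\in\mathcal K_p(H)$ from the eigenvalue estimate $|\sqrt\lambda-\alpha|\le|\lambda-\alpha^2|/\alpha$, whereas the paper writes $|T|-\alpha I=A(|T|+\alpha I)^{-1}$ and uses the ideal property, which is the same computation at the operator level.
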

\begin{proof}
Assume \eqref{schattenclassperturb}. By \eqref{cptplusid},
\[
T^*T=\alpha^2I+\widetilde K,
\]
where $\widetilde K\in\mathcal K_p(H)$; indeed, $\mathcal K_p(H)$ is a two-sided ideal, $K^*K\in\mathcal K_p(H)$, and $P_{N(V)}$ is finite-rank. Weyl's theorem gives
\[
\sigma_{\mathrm{ess}}(T^*T)=\{\alpha^2\}.
\]
Since $\widetilde K$ is compact and self-adjoint, its nonzero eigenvalues $(\mu_n)$, repeated according to multiplicity, satisfy $\sum_n|\mu_n|^p<\infty$. The discrete eigenvalues of $T^*T$ different from $\alpha^2$ are precisely $\lambda_n=\alpha^2+\mu_n$, with the same multiplicities. Hence
\[
\sum_n|\lambda_n-\alpha^2|^p=\sum_n|\mu_n|^p<\infty.
\]

Conversely, assume \eqref{elpcondn}. The self-adjoint operator
\[
A:=T^*T-\alpha^2I
\]
has essential spectrum $\{0\}$ and its nonzero eigenvalues, repeated according to multiplicity, are $(\lambda_n-\alpha^2)$. The assumed $\ell^p$-summability therefore implies $A\in\mathcal K_p(H)$. Since
\[
(|T|-\alpha I)(|T|+\alpha I)=T^*T-\alpha^2I=A
\]
and $|T|+\alpha I$ is invertible, we obtain
\[
|T|-\alpha I=A(|T|+\alpha I)^{-1}\in\mathcal K_p(H).
\]
Let $T=V|T|$ be the polar decomposition. Then
\[
T-\alpha V=V(|T|-\alpha I)\in\mathcal K_p(H).
\]
Moreover, $T^*T=\alpha^2I+A$ is Fredholm, so $N(T)=N(T^*T)=N(V)$ is finite-dimensional. Thus \eqref{schattenclassperturb} holds.
\end{proof}

Next, we establish several equivalent conditions for compact (Schatten-class) perturbations of partial isometries with finite-dimensional kernels. The following proposition motivates the subsequent characterization theorems.
\begin{prp}\cite[Proposition 4]{dassarkar}
Let $T\in \mathcal B(H)$ be left-invertible and of finite index. The following
statements are equivalent.
\begin{enumerate}
\item  $T = \text{compact} + \text{isometry}$.
\item  $I - T^{*}T$ is compact.
\item  $L_T -T^{*}$ is compact, where $L_{T}:=(T^*T)^{-1}T^*$.
\item $I - TT^{*}$ is compact.
\end{enumerate}
\end{prp}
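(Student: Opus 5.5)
We argue cyclically (1)$\Rightarrow$(2)$\Rightarrow$(3)$\Rightarrow$(1) and treat (4) separately. Throughout, $T$ is left-invertible, so $T^*T$ is invertible, $L_T=(T^*T)^{-1}T^*$ is a bounded left inverse with $L_TT=I$, and $T$ is Fredholm of finite index with $N(T)=\{0\}$.

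For (1)$\Rightarrow$(2), write $T=V+K$ with $V$ an isometry and $K$ compact. The computation \eqref{cptplusid} with $\alpha=1$ and $P_{N(V)}=0$ gives $T^*T=I+K^*K+K^*V+V^*K$. Hence $I-T^*T$ is compact.

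For (2)$\Rightarrow$(3), put $A=I-T^*T\in\mathcal K(H)$. Then
\[
L_T-T^*=(T^*T)^{-1}T^*-T^*=(T^*T)^{-1}\bigl(I-T^*T\bigr)T^*=(T^*T)^{-1}AT^*,
\]
which is compact.

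For (3)$\Rightarrow$(1), multiply on the right by $T$: $(L_T-T^*)T=I-T^*T$. So (3) gives (2). From (2) we get $\sigma_{\mathrm{ess}}(T^*T)=\{1\}$ by Weyl's theorem, and $T\notin\mathcal K(H)$ because $T$ is left-invertible on an infinite-dimensional space. Now follow the converse half of Theorem~\ref{essschattenthm} with compact in place of $\mathcal K_p$. Since $|T|-I=(T^*T-I)(|T|+I)^{-1}$ is compact, the polar decomposition $T=V|T|$ gives $T-V=V(|T|-I)\in\mathcal K(H)$. Here $N(V)=N(T)=\{0\}$, so $V$ is an isometry, which is (1). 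Alternatively one can invoke Theorem~\ref{ANcloschar} directly and note that $N(V)=N(T)=\{0\}$.

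The main obstacle is (4), because $I-TT^*$ brings in $N(T^*)$. If $I-TT^*$ is compact, then $I-TT^*$ restricted to $N(T^*)$ is the identity on $N(T^*)$. So the hypothesis forces $\dim N(T^*)<\infty$, and this direction needs no index assumption. Conversely, finite index together with $N(T)=\{0\}$ gives $\dim N(T^*)=-\operatorname{ind}T<\infty$. Then, given (1) with $T=V+K$,
\[
TT^*=VV^*+\text{compact}=I-P_{N(V^*)}+\text{compact}.
\]
Compact perturbations preserve the index, so $\operatorname{ind}V=\operatorname{ind}T$ is finite and $\dim N(V^*)<\infty$. Thus $P_{N(V^*)}$ has finite rank and (4) follows.

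For (4)$\Rightarrow$(2), pass to the Calkin algebra. There $\pi(T)$ is invertible because $T$ is Fredholm, and (4) says $\pi(T)\pi(T)^*=1$. Hence $\pi(T)^*=\pi(T)^{-1}$, which gives $\pi(T)^*\pi(T)=1$. That is, $I-T^*T$ is compact. The same Calkin argument applied in the reverse direction gives another route to (2)$\Rightarrow$(4), and this symmetric argument is what I expect to write down, since it avoids the explicit index bookkeeping above.
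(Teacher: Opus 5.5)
Your proof is correct. The paper gives no proof of this proposition; it quotes it from \cite{dassarkar} as motivation.

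Your treatment of (1)$\Leftrightarrow$(2)$\Leftrightarrow$(3) is the paper's own argument from Theorems~\ref{essschattenthm} and~\ref{schattenpluspi}, specialized to $\alpha=1$ and $N(T)=\{0\}$:
\begin{enumerate}
\item For left-invertible $T$ one has $L_T=(T^*T)^{-1}T^*=T^\dagger$.
\item Your identity $(L_T-T^*)T=I-T^*T$ is the paper's derivation of $\alpha I-T^*T\in\mathcal K_p(H)$ from $\alpha T^\dagger-T^*\in\mathcal K_p(H)$.
\item Your factorization $(T^*T)^{-1}(I-T^*T)T^*$ is the same operator as the paper's $(I-T^*T)T^\dagger$.
\item The return to an isometry via $|T|-I=(T^*T-I)(|T|+I)^{-1}$ and the polar decomposition is verbatim the converse half of Theorem~\ref{essschattenthm}.
\end{enumerate}
Your appeals to $\sigma_{\mathrm{ess}}(T^*T)=\{1\}$ and $T\notin\mathcal K(H)$, and the index bookkeeping for (1)$\Rightarrow$(4), are correct but not needed.

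Condition (4) has no counterpart in the paper's theorems. Your Calkin-algebra argument handles it cleanly and isolates exactly where finite index enters, namely the invertibility of $\pi(T)$. That hypothesis is genuinely needed: the unilateral shift of infinite multiplicity is left-invertible and satisfies (1)--(3) but not (4).

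A route closer to the paper's Moore--Penrose calculus is also available, using three identities:
\begin{enumerate}
\item $T(L_T-T^*)=P_{R(T)}-TT^*$. This is the left multiplication by $T$ in the paper's remark after the weighted-shift example.
\item $L_T(P_{R(T)}-TT^*)=L_T-T^*$, because $L_TP_{R(T)}=L_T$ and $L_TTT^*=T^*$.
\item $I-TT^*=(P_{R(T)}-TT^*)+P_{N(T^*)}$, where $P_{N(T^*)}$ has finite rank by the finite-index hypothesis.
\end{enumerate}
Together these give (3)$\Leftrightarrow$(4) in two lines, and the argument carries over unchanged with $\mathcal K_p(H)$ in place of $\mathcal K(H)$.
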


First, we prove the result for Schatten-class perturbations. Here, we recall the definition of the Moore–Penrose inverse.
\begin{dfn}\cite[Definition P, Page 48]{GroetschGI}
If $T \in \mathcal B(H_1, H_2)$ has closed range, then
$T^{\dagger} $ is the unique operator in $\mathcal B(H_2, H_1)$ satisfying
\begin{enumerate}
\item $TT^{\dagger} = (TT^{\dagger})^*$
\item $T^{\dagger}T = (T^{\dagger}T)^{*}$
\item  $TT^{\dagger}T = T$ 
\item  $T^{\dagger}TT^{\dagger} = T^{\dagger}$.
\end{enumerate}
\end{dfn}
The Moore–Penrose inverse can be defined for operators that are not necessarily bounded or do not have closed range. For more details, we refer to \cite{BenGre}.
\begin{prp}\label{mpirepn}\cite{GroetschGI}
Let $T\in \mathcal  B(H_1,H_2)$ has a closed range. Then
\begin{equation*}
T^{\dagger}=T^*(TT^*)^{\dagger}=(T^*T)^{\dagger}T^*.
\end{equation*}
\end{prp}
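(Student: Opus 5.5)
We prove the two identities in Proposition~\ref{mpirepn} by checking the four Penrose equations in the Definition, since uniqueness of $T^\dagger$ then forces equality. First we confirm the candidates make sense. Since $R(T)$ is closed, $R(T^*)$ is closed (closed range theorem). Moreover $R(T^*T)=R(T^*)$ and $R(TT^*)=R(T)$: for instance $N(T^*T)=N(T)$, so $\overline{R(T^*T)}=N(T)^\perp=R(T^*)$. To see that $R(T^*T)$ is itself closed, restrict $T$ to $N(T)^\perp$, where it is bounded below, say $\|Tx\|\ge c\|x\|$. Then $\langle T^*Tx,x\rangle\ge c^2\|x\|^2$ on $N(T)^\perp$, so $T^*T$ is bounded below there and has closed range. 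Hence $T^*T$ and $TT^*$ have closed ranges, and their Moore--Penrose inverses exist and are bounded.

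We next record the structure of $(T^*T)^\dagger$. For a self-adjoint $A$ with closed range, $A^\dagger$ is self-adjoint and $AA^\dagger=A^\dagger A=P_{R(A)}$. Indeed, $A$ restricts to an invertible operator on $R(A)=N(A)^\perp$, and $A^\dagger$ is the inverse of this restriction extended by $0$ on $N(A)$. This description satisfies all four Penrose conditions and is self-adjoint. Applying it with $A=T^*T$ gives
\[
(T^*T)^\dagger T^*T=T^*T(T^*T)^\dagger=P_{R(T^*)}=P_{N(T)^\perp}.
\]

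Set $X=(T^*T)^\dagger T^*$ and verify the Penrose conditions:
\begin{itemize}
\item $XT=(T^*T)^\dagger T^*T=P_{N(T)^\perp}$, which is self-adjoint.
\item $TXT=TP_{N(T)^\perp}=T$.
\item $XTX=P_{N(T)^\perp}(T^*T)^\dagger T^*=(T^*T)^\dagger T^*=X$, because $R((T^*T)^\dagger)=R(T^*T)=N(T)^\perp$.
\item For $TX$: since $R(T^*)=N(T)^\perp=R(T^*T)$, each $y$ has $T^*y=T^*Tz$ for some $z$. Then $TXy=T(T^*T)^\dagger T^*Tz=TP_{N(T)^\perp}z=Tz$. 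Also $T^*(y-Tz)=0$, so $Tz=P_{R(T)}y$. Hence $TX=P_{R(T)}$, which is self-adjoint.
\end{itemize}
By uniqueness, $X=T^\dagger$.

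The other identity follows by symmetry. Apply the identity just proved to $T^*$, which also has closed range: $(T^*)^\dagger=(TT^*)^\dagger T$. Taking adjoints, and using that $(T^*)^\dagger=(T^\dagger)^*$ (the Penrose conditions are invariant under adjoints) and that $(TT^*)^\dagger$ is self-adjoint, gives $T^\dagger=T^*(TT^*)^\dagger$.

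The main obstacle is the preliminary step: showing that $T^*T$ has closed range and identifying $R(T^*T)=R(T^*)$. The remaining verifications are routine algebra.
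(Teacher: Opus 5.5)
Your proof is correct. The four parts all hold up:
\begin{itemize}
\item the closed-range preliminaries, namely that $R(T^*T)=R(T^*)$ and that it is closed by the lower bound on $N(T)^\perp$;
\item the description of $A^\dagger$ for a self-adjoint closed-range $A$ as the inverse of $A|_{R(A)}$ extended by $0$;
\item the four Penrose checks for $X=(T^*T)^\dagger T^*$, where $TX=P_{R(T)}$ comes from writing $T^*y=T^*Tz$;
\item the passage to $T^*(TT^*)^\dagger$ via $(T^*)^\dagger=(T^\dagger)^*$ and the self-adjointness of $(TT^*)^\dagger$.
\end{itemize}

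The paper gives no proof of this proposition; it only cites Groetsch, so there is no argument to match yours against. A common textbook route uses the least-squares characterization instead. There, $T^\dagger y$ is the unique solution lying in $N(T)^\perp$ of the normal equation $T^*Tx=T^*y$. Once $R(T^*T)=R(T^*)$ is known to be closed, $T^\dagger=(T^*T)^\dagger T^*$ can be read off directly, and the second identity follows by duality. That route is shorter, but it presupposes the equivalence between the least-squares and Penrose definitions of the generalized inverse. Your verification of the four Penrose equations works straight from the Definition as the paper states it, uniqueness included, and is fully self-contained.
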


We now arrive at the main result of this paper, which provides a spectral characterization of Schatten-class perturbations of scalar multiples of partial isometries. All subsequent characterizations and applications will be derived from this theorem.
\begin{thm}[Main Theorem]\label{schattenpluspi}
Let $T\in\mathcal B(H)$, let $1\leq p<\infty$, and let $\alpha>0$. Then the following are equivalent:
\begin{enumerate}
\item\label{cond1} $\sigma_{\mathrm{ess}}(T^*T)=\{\alpha\}$ and, if $(\lambda_n)$ denotes the discrete eigenvalues of $T^*T$ different from $\alpha$, repeated according to multiplicity, then
\[
\sum_n|\lambda_n-\alpha|^p<\infty.
\]
\item\label{cond2} There exist a partial isometry $V$ with $\dim N(V)<\infty$ and $K\in\mathcal K_p(H)$ such that
\[
T=\sqrt\alpha\,V+K.
\]
\item\label{cond3} $\alpha I-T^*T\in\mathcal K_p(H)$.
\item\label{cond4} $T$ has closed range, $\alpha T^\dagger-T^*\in\mathcal K_p(H)$, and $N(T)$ is finite-dimensional.
\end{enumerate}
\end{thm}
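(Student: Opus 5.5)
The plan is to prove the cycle of implications \eqref{cond1}$\Leftrightarrow$\eqref{cond2}, \eqref{cond1}$\Leftrightarrow$\eqref{cond3}, and \eqref{cond3}$\Leftrightarrow$\eqref{cond4}. Note that $T\notin\mathcal K(H)$ follows from each of the four conditions. Under \eqref{cond3} we have $T^*T=\alpha I-(\alpha I-T^*T)$, a nonzero scalar plus a compact operator, so $T^*T$ is not compact. Under \eqref{cond1} the essential spectrum of $T^*T$ is not $\{0\}$. Hence Theorem~\ref{essschattenthm}, applied with $\sqrt\alpha$ in place of $\alpha$, gives \eqref{cond1}$\Leftrightarrow$\eqref{cond2} directly.

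\textbf{\eqref{cond1}$\Leftrightarrow$\eqref{cond3}.} This is the spectral content already inside the proof of Theorem~\ref{essschattenthm}. Set $A=T^*T-\alpha I$, which is self-adjoint.

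Assume \eqref{cond1}. Then $\sigma_{\mathrm{ess}}(A)=\{0\}$, so $A$ is compact: it is diagonalizable, and its only accumulation point of eigenvalues, or eigenvalue of infinite multiplicity, is $0$. Its nonzero eigenvalues are exactly the $\lambda_n-\alpha$, so the $\ell^p$ condition says $A\in\mathcal K_p(H)$.

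Conversely, assume $A\in\mathcal K_p(H)$. Weyl's theorem gives $\sigma_{\mathrm{ess}}(T^*T)=\{\alpha\}$. The discrete eigenvalues of $T^*T$ other than $\alpha$ are $\alpha+\mu_n$, where the $\mu_n$ are the nonzero eigenvalues of $A$, counted with the same multiplicities. Since $s_n(A)=|\mu_n|$, summability follows.

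\textbf{\eqref{cond3}$\Rightarrow$\eqref{cond4}.} By \eqref{cond2} and Theorem~\ref{coptperturbpi}, $R(T)$ is closed and $\dim N(T)<\infty$. For the Moore--Penrose condition, use Proposition~\ref{mpirepn}: $T^\dagger=(T^*T)^\dagger T^*$, so
\[
\alpha T^\dagger-T^*=\bigl(\alpha(T^*T)^\dagger-P_{N(T)^\perp}\bigr)T^*,
\]
using $P_{N(T)^\perp}T^*=T^*$.

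Write $B=T^*T$. On $N(T)^\perp=R(B)$, the operator $B$ is invertible with inverse $B^\dagger$. Then
\[
\alpha B^\dagger-P_{N(T)^\perp}=B^\dagger(\alpha I-B)P_{N(T)^\perp},
\]
since $B^\dagger B=P_{N(T)^\perp}$ and $B$ commutes with that projection. This is a bounded operator times an element of $\mathcal K_p(H)$. The ideal property then gives $\alpha T^\dagger-T^*\in\mathcal K_p(H)$.

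\textbf{\eqref{cond4}$\Rightarrow$\eqref{cond3}.} This is the step I expect to need the most care. Multiply on the right by $T$:
\[
(\alpha T^\dagger-T^*)T=\alpha T^\dagger T-T^*T=\alpha P_{N(T)^\perp}-T^*T\in\mathcal K_p(H).
\]
Since $N(T)$ is finite-dimensional, $\alpha P_{N(T)}$ has finite rank and so lies in $\mathcal K_p(H)$. Adding it gives $\alpha I-T^*T\in\mathcal K_p(H)$, which is \eqref{cond3}.

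The only point to check is that closed range is what guarantees that $T^\dagger$ is bounded and that $T^\dagger T=P_{N(T)^\perp}$; both follow from the Moore--Penrose definition together with Proposition~\ref{mpirepn}. The finite-dimensionality of $N(T)$ in \eqref{cond4} is used essentially here, and this explains why it is included in the hypothesis.
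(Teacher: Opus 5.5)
Your proof is correct and uses the same tools as the paper: Theorem~\ref{essschattenthm} for (1)$\Leftrightarrow$(2), and Moore--Penrose identities for (3)$\Leftrightarrow$(4). Your (4)$\Rightarrow$(3) is essentially the paper's argument. What differs is how you organize the implications, and in one respect your version is better. The paper proves (1)$\Leftrightarrow$(2)$\Rightarrow$(3)$\Leftrightarrow$(4), getting (2)$\Rightarrow$(3) by expanding $T^*T-\alpha I=\sqrt\alpha\,(V^*K+K^*V)+K^*K-\alpha P_{N(V)}$. As written, it never goes back from (3) to (1) or (2); that implication is only implicit in the converse half of the proof of Theorem~\ref{essschattenthm}. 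Your direct spectral equivalence (1)$\Leftrightarrow$(3) closes the cycle explicitly: Weyl's theorem in one direction, and in the other the fact that a self-adjoint operator with essential spectrum $\{0\}$ is compact, with singular values the moduli of its eigenvalues. Your (3)$\Rightarrow$(4), on the other hand, is more roundabout than it needs to be. Closed range and $\dim N(T)<\infty$ follow at once because $T^*T=\alpha I-(\alpha I-T^*T)$ is Fredholm, with no detour through (2) and Theorem~\ref{coptperturbpi}. The paper also gets the Moore--Penrose condition in one line from $T^*TT^\dagger=T^*P_{R(T)}=T^*$, i.e.\ $\alpha T^\dagger-T^*=(\alpha I-T^*T)T^\dagger$, instead of factoring through $(T^*T)^\dagger$. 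One small omission: to apply Theorem~\ref{essschattenthm} in the direction (2)$\Rightarrow$(1), you also need $T\notin\mathcal K(H)$ under (2). This holds because $V^*V=P_{N(V)^\perp}$ has infinite rank.
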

\begin{proof}
The equivalence of \eqref{cond1} and \eqref{cond2} follows from Theorem~\ref{essschattenthm}, with $\sqrt\alpha$ in place of $\alpha$.

Assume \eqref{cond2}. Expanding $T^*T$ gives
\[
T^*T-\alpha I
=\sqrt\alpha\,(V^*K+K^*V)+K^*K-\alpha P_{N(V)}\in\mathcal K_p(H),
\]
so \eqref{cond3} holds.

Assume \eqref{cond3}. Then $T^*T=\alpha I+K$ for some $K\in\mathcal K_p(H)$. Hence $T^*T$ is Fredholm, so $R(T)$ is closed and $T^\dagger\in\mathcal B(H)$. Since $R(T)$ is closed,
\[
(T^*T)T^\dagger=T^*.
\]
Therefore
\[
(\alpha I-T^*T)T^\dagger=\alpha T^\dagger-T^*\in\mathcal K_p(H).
\]
Moreover, $N(T)=N(T^*T)$ is finite-dimensional because $T^*T$ is Fredholm. Thus \eqref{cond4} holds.

Finally, assume \eqref{cond4}. Then
\[
(\alpha T^\dagger-T^*)T
=\alpha P_{N(T)^\perp}-T^*T\in\mathcal K_p(H).
\]
Since $N(T)$ is finite-dimensional,
\[
\alpha I-T^*T
=(\alpha T^\dagger-T^*)T+\alpha P_{N(T)}\in\mathcal K_p(H).
\]
Thus \eqref{cond3} holds, completing the proof.
\end{proof}
\begin{eg}
Let $T$ be the unilateral weighted shift on $\ell^2(\mathbb N)$ defined by
\[
Te_n=w_ne_{n+1}, \qquad n\ge 1,
\]
where $\{e_n:n\in\mathbb N\}$ is the standard orthonormal basis of $\ell^2(\mathbb N)$ and $(w_n)\in \ell^{\infty}(\mathbb N)$ with $w_n>0$ for each $n\in \mathbb N$.
Assume that $w_n\to \alpha>0$ and $(w_n-\alpha)\in \ell^p.$

Let $S$ denote the unilateral shift. Then $T=\alpha S+K,$ where 
$$Ke_n=(w_n-\alpha)e_{n+1}.$$

Since $\sum_{n=1}^{\infty}|w_n-\alpha|^p<\infty,$ it follows that $K\in \mathcal K_p(H)$. Indeed, $$K^*Ke_n=|w_n-\alpha|^2e_n,$$
and hence the singular values of $K$ are given by
\[
s_n(K)=|w_n-\alpha|.
\]
Therefore,
\[
\sum_{n=1}^{\infty}s_n(K)^p
=
\sum_{n=1}^{\infty}|w_n-\alpha|^p
<\infty.
\]

Thus $T$ is a Schatten $p$-class perturbation of the isometry $\alpha S$.

Since $(\alpha S)^*(\alpha S)=\alpha^2 I,$ we obtain $\sigma_{\mathrm{ess}}(T^*T)=\{\alpha^2\}.$ Moreover, $m_e(T)=\alpha>0,$ and hence $T$ is left semi-Fredholm.

This example illustrates Theorem \ref{schattenpluspi}. As a concrete example, the weighted shift with weights
\[
w_n=\alpha+\frac{1}{(n+1)^r},
\qquad r>\frac1p,
\]
satisfies the above hypotheses.
\end{eg}

\begin{rmk}
\begin{enumerate}
\item   Multiplying $\alpha T^{\dagger}-T^* \in \mathcal  K_{p}(H)$, on the left by $T$, we obtain that $\alpha T^{\dagger}-T^* \in \mathcal K_p(H)$ implies $\alpha P_{R(T)}-TT^*\in \mathcal  K_{p}(H)$. However, it is not clear whether the converse holds.
\item It is to be noted that the operator $T$ satisfying the condition (\ref{cond2}) of Theorem~\ref{schattenpluspi} has a non-trivial hyperinvariant subspace. For more details, we refer to \cite[Corollary 3.4]{ANHISP}.
\end{enumerate}
\end{rmk} 
\begin{cor}
Let $T\in \mathcal{B}(H)$. If
\[
\alpha I-T^{*}T\in\mathcal K_p(H)
\]
for some $\alpha>0$ and $1\le p<\infty$, then $T$ has a nontrivial hyperinvariant subspace.

Equivalently, if
\[
\alpha T^{\dagger}-T^{*}\in\mathcal K_p(H)
\]
and $\dim N(T)<\infty$, then $T$ has a nontrivial hyperinvariant subspace.
\end{cor}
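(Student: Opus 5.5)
The plan is to reduce the corollary to the Main Theorem (Theorem~\ref{schattenpluspi}) and then invoke the known hyperinvariant subspace result cited in the preceding remark (\cite[Corollary 3.4]{ANHISP}). That result applies to operators of the form $\sqrt\alpha\,V+K$ with $V$ a partial isometry with finite-dimensional kernel and $K$ compact, the class described in Theorem~\ref{ANcloschar}.

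\emph{First assertion.} Assume $\alpha I-T^*T\in\mathcal K_p(H)$. This is exactly condition \eqref{cond3} of Theorem~\ref{schattenpluspi}, so condition \eqref{cond2} holds: there are a partial isometry $V$ with $\dim N(V)<\infty$ and $K\in\mathcal K_p(H)\subseteq\mathcal K(H)$ with $T=\sqrt\alpha\,V+K$.

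Before applying the cited result, I would check that $T$ is not compact, because Theorem~\ref{ANcloschar} and the framework of \cite{ANHISP} exclude compact operators. This is immediate. By Weyl's theorem, $\sigma_{\mathrm{ess}}(T^*T)=\{\alpha\}$ with $\alpha>0$. If $T$ were compact, then $T^*T$ would be compact, and its essential spectrum would be $\{0\}$ because $H$ is infinite-dimensional. This contradicts $\alpha>0$.

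Hence $T\in\mathcal B(H)\setminus\mathcal K(H)$ satisfies the equivalent conditions of Theorem~\ref{ANcloschar}, and \cite[Corollary 3.4]{ANHISP} yields a nontrivial hyperinvariant subspace.

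\emph{Second assertion.} Assume $\alpha T^\dagger-T^*\in\mathcal K_p(H)$ and $\dim N(T)<\infty$. The Moore--Penrose inverse $T^\dagger$ is only meaningful as a bounded operator when $R(T)$ is closed, so I read the hypothesis as including that closedness, as in condition \eqref{cond4}. Then condition \eqref{cond4} of Theorem~\ref{schattenpluspi} holds, which gives \eqref{cond3}, and the first part applies.

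The word ``equivalently'' is justified by the equivalence \eqref{cond3}$\Leftrightarrow$\eqref{cond4} in the Main Theorem.

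\emph{Main obstacle.} The only delicate point is matching hypotheses with the cited external result. I need to ensure that \cite[Corollary 3.4]{ANHISP} is stated for compact perturbations of $\sqrt\alpha$ times partial isometries with finite kernel, or equivalently for singleton essential spectrum of $T^*T$, and that noncompactness holds. Noncompactness is handled by the Weyl argument above. If that corollary is instead phrased in terms of $T^*T\in\overline{\mathcal{AN}(H)}$, I would pass through Theorem~\ref{ANcloschar}\,(1)$\Leftrightarrow$(2) to reach the required form.
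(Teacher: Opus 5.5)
Your reduction is the paper's: feed condition (3), or (4) read with closed range, of Theorem~\ref{schattenpluspi} into \cite[Corollary 3.4]{ANHISP}. The problem is the final step. You obtain $T=\sqrt\alpha\,V+K$ with $K\in\mathcal K_p(H)$, then weaken this to $K\in\mathcal K(H)$ and pass to the class of Theorem~\ref{ANcloschar}. You then assert that the cited corollary covers every compact perturbation of $\sqrt\alpha$ times a partial isometry with finite-dimensional kernel. That is not the result the paper relies on. The introduction identifies the hypothesis of the hyperinvariant-subspace theorem in \cite{ANHISP} with the spectral $\ell^p$ condition, i.e.\ condition (1) of Theorem~\ref{schattenpluspi} (equivalently (2) with $K\in\mathcal K_p(H)$). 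This is why the corollary is deduced from the Schatten-class theorem and not already from Theorem~\ref{ANcloschar}.

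The summability is the essential input. If $N(T)\neq\{0\}$ or $N(T^*)\neq\{0\}$, then $N(T)$ or the closed nonzero range $R(T)$ is already a nontrivial hyperinvariant subspace. So the real content is the case where $T$ is invertible and $T=\sqrt\alpha\,U+K$ with $U$ unitary. For unitary-plus-compact operators, even the existence of nontrivial invariant subspaces is, as far as I know, open, so the version you attribute to \cite{ANHISP} would go well beyond what is known. The repair is one line: use (3)$\Rightarrow$(1) of Theorem~\ref{schattenpluspi} (or keep $K\in\mathcal K_p(H)$) and cite the result in that form. Your non-compactness check is then correct but unnecessary.

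Separately, and this affects the paper's statement as well: $T=\sqrt\alpha\,I$ satisfies both hypotheses ($\alpha I-T^*T=0$, $\alpha T^\dagger-T^*=0$, $N(T)=\{0\}$). But every operator commutes with it, so its only hyperinvariant subspaces are $\{0\}$ and $H$. The corollary therefore needs the assumption that $T$ is not a scalar multiple of $I$, which any correct form of the cited result must contain. Your step of matching hypotheses with the external result is where this should have been caught.
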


\begin{proof}
The conclusion follows immediately from Theorem \ref{schattenpluspi} together with Corollary 3.4 of \cite{ANHISP}.
\end{proof}
Following similar steps as in Theorem \ref{schattenpluspi}, with the help of \cite{ANHISP}, we can prove the following two results.
\begin{thm}\label{essfrankthm}
Let $T\in \mathcal  B(H)$. Then the following are equivalent:
\begin{enumerate}
    \item\label{frankperturb} there exist $F\in \mathcal F(H)$, $\alpha>0$, and a partial isometry $V$ with $\dim N(V)<\infty$ such that $T=F+\alpha V$
    \item\label{finiteeigcondn} $\sigma_{\text{ess}}(T^*T)={\{\alpha^2}\}$ and $T^*T$ has only finitely many discrete eigenvalues different from $\alpha^2$, counted with multiplicity.
\end{enumerate}
\end{thm}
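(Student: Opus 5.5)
We follow the template of Theorem~\ref{essschattenthm}, replacing $\mathcal K_p(H)$ by the ideal $\mathcal F(H)$ of finite-rank operators.

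\emph{Proof of (\ref{frankperturb})$\Rightarrow$(\ref{finiteeigcondn}).} Assume $T=\alpha V+F$. By \eqref{cptplusid} and \eqref{tildacpt} with $K=F$,
\[
T^*T=\alpha^2 I+\widetilde F,\qquad \widetilde F=F^*F+\alpha(F^*V+V^*F)-\alpha^2P_{N(V)}.
\]
Each summand of $\widetilde F$ is finite rank: $F$ is, and $P_{N(V)}$ is because $\dim N(V)<\infty$. So $\widetilde F$ is self-adjoint and finite rank. Weyl's theorem gives $\sigma_{\mathrm{ess}}(T^*T)=\{\alpha^2\}$. The eigenvalues of $T^*T$ different from $\alpha^2$ are exactly $\alpha^2+\mu$, where $\mu$ runs over the nonzero eigenvalues of $\widetilde F$, with the same multiplicities. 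There are finitely many of these, counted with multiplicity, since their total multiplicity is at most $\operatorname{rank}\widetilde F$.

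\emph{Proof of (\ref{finiteeigcondn})$\Rightarrow$(\ref{frankperturb}).} Set $A:=T^*T-\alpha^2 I$, which is self-adjoint with $\sigma_{\mathrm{ess}}(A)=\{0\}$. We first show $A$ is finite rank, using the spectral theorem.
\begin{itemize}
\item Every point of $\sigma(A)\setminus\{0\}$ lies outside $\sigma_{\mathrm{ess}}(A)$, so it is an isolated eigenvalue of finite multiplicity.
\item By hypothesis there are only finitely many such eigenvalues, counted with multiplicity. So $\sigma(A)$ is a finite set $\{0\}\cup\{\mu_1,\dots,\mu_k\}$.
\item Because $\sigma(A)$ is finite, the spectral measure gives $A=\sum_j \mu_j P_j$, where $P_j$ is the spectral projection onto $N(A-\mu_j I)$; the part of $A$ on the spectral subspace for $\{0\}$ is zero.
\item Each $P_j$ is finite rank, so $A\in\mathcal F(H)$.
\end{itemize}
This is the one step requiring care: it must use that $0$ is the only possible accumulation point, and that no spectral mass sits away from the eigenvalues.

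Next, $|T|+\alpha I$ is invertible and $(|T|-\alpha I)(|T|+\alpha I)=A$. Hence
\[
|T|-\alpha I=A(|T|+\alpha I)^{-1}\in\mathcal F(H).
\]
Let $T=V|T|$ be the polar decomposition. Then $F:=T-\alpha V=V(|T|-\alpha I)$ is finite rank.

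Finally, $T^*T=\alpha^2 I+A$ is Fredholm, so $N(V)=N(T)=N(T^*T)$ is finite-dimensional. This gives (\ref{frankperturb}) with $\alpha$ the same scalar as in (\ref{finiteeigcondn}).

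We do not need to assume $T\notin\mathcal K(H)$. Since $\alpha>0$, condition (\ref{finiteeigcondn}) already makes $T^*T$ a nonzero scalar plus a compact operator. In (\ref{frankperturb}), the scalar $\alpha$ of $T=F+\alpha V$ matches $\alpha^2$ in (\ref{finiteeigcondn}), exactly as in Theorem~\ref{essschattenthm}.
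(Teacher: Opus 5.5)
Your proposal is correct and takes essentially the paper's intended route: the paper gives no separate proof and says to follow the steps of the Schatten-class case, and you run the proof of Theorem~\ref{essschattenthm} with $\mathcal{K}_p(H)$ replaced by $\mathcal{F}(H)$. One useful addition is that you justify via the spectral theorem that a self-adjoint $A$ with $\sigma_{\mathrm{ess}}(A)=\{0\}$ and only finitely many nonzero eigenvalues, counted with multiplicity, is finite rank; the paper's template leaves the analogous step implicit.
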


\begin{thm}
Let $T\in \mathcal  B(H)\setminus \mathcal K(H)$. Then the following are equivalent:
\begin{enumerate}

\item \label{ANcloschar1}  $T \in \overline{\mathcal  {AN}(H)}^{\|\cdot \|}$, (the closure is with respect to the  operator norm)
\item \label{ANcloschar2} $\sigma_{\text{ess}}(T^*T)={\{\alpha}\}$ for some $\alpha > 0$ 
\item \label{ANcloschar3} there exist $K\in \mathcal K(H)$ and a partial isometry $V$ with $\dim N(V)<\infty$ such that $T=K+\sqrt{\alpha}V$
 
\item \label{ANcloschar4}  $\alpha I-T^*T\in \mathcal  K(H)$ 
\item \label{ANcloschar5} $\alpha T^{\dagger}-T^* \in \mathcal  K(H)$ and $N(T)$ is finite-dimensional
\end{enumerate}
\end{thm}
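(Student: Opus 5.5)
The plan is to mirror the proof of Theorem~\ref{schattenpluspi}, with $\mathcal K_p(H)$ replaced by $\mathcal K(H)$ throughout, and to attach condition \eqref{ANcloschar1} through the results of \cite{ANHISP}. Since $T\notin\mathcal K(H)$, the equivalences \eqref{ANcloschar2}$\Leftrightarrow$\eqref{ANcloschar3} are exactly Theorem~\ref{ANcloschar}; note that $\alpha>0$ is forced there, because $\sigma_{\mathrm{ess}}(T^*T)=\{0\}$ would make $T^*T$, hence $T$, compact. For \eqref{ANcloschar1}$\Leftrightarrow$\eqref{ANcloschar2}, I would invoke the result of \cite{GRSSS1} recalled in the introduction. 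It states that the norm closure of $\mathcal{AN}(H)$ consists of the operators whose modulus has singleton essential spectrum. Since $\sigma_{\mathrm{ess}}(T^*T)=\{\beta^2 : \beta\in\sigma_{\mathrm{ess}}(|T|)\}$ by the spectral mapping theorem in the Calkin algebra, $\sigma_{\mathrm{ess}}(|T|)=\{\sqrt\alpha\}$ if and only if $\sigma_{\mathrm{ess}}(T^*T)=\{\alpha\}$. Again $T\notin\mathcal K(H)$ excludes the value $0$.

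For \eqref{ANcloschar3}$\Rightarrow$\eqref{ANcloschar4}, I would expand as in \eqref{cptplusid}:
\[
T^*T-\alpha I=\sqrt\alpha\,(V^*K+K^*V)+K^*K-\alpha P_{N(V)},
\]
which is compact because $P_{N(V)}$ has finite rank. For \eqref{ANcloschar4}$\Rightarrow$\eqref{ANcloschar2}, Weyl's theorem applied to $\alpha I+(T^*T-\alpha I)$ gives $\sigma_{\mathrm{ess}}(T^*T)=\{\alpha\}$ directly. Alternatively, I could pass through $|T|-\sqrt\alpha I=(T^*T-\alpha I)(|T|+\sqrt\alpha I)^{-1}$ and the polar decomposition, exactly as in Theorem~\ref{essschattenthm}.

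For \eqref{ANcloschar4}$\Rightarrow$\eqref{ANcloschar5}, the operator $T^*T=\alpha I+\text{compact}$ is Fredholm. Hence $R(T)$ is closed, $T^\dagger$ is bounded, and $N(T)=N(T^*T)$ is finite-dimensional. Proposition~\ref{mpirepn} gives $T^*TT^\dagger=T^*$, so $\alpha T^\dagger-T^*=(\alpha I-T^*T)T^\dagger$ is compact. For \eqref{ANcloschar5}$\Rightarrow$\eqref{ANcloschar4}, we have
\[
\alpha I-T^*T=(\alpha T^\dagger-T^*)T+\alpha P_{N(T)},
\]
which is compact. Here $T^\dagger T=P_{N(T)^\perp}$ is used.

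The main obstacle is \eqref{ANcloschar5}, which, unlike condition \eqref{cond4} of Theorem~\ref{schattenpluspi}, does not explicitly assume that $R(T)$ is closed; otherwise $T^\dagger$ need not be bounded. I would read \eqref{ANcloschar5} as implicitly requiring $T^\dagger\in\mathcal B(H)$, since $\mathcal K(H)$ consists of bounded operators, and the boundedness of $T^\dagger$ is equivalent to $R(T)$ being closed. With that reading, the identity $T^\dagger T=P_{N(T)^\perp}$ is available and the argument above goes through.
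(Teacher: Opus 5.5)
Your proposal is correct and essentially matches the paper's proof. The same expansion of $T^*T$ gives (3)$\Rightarrow$(4), and the same identities $(\alpha I-T^*T)T^\dagger=\alpha T^\dagger-T^*$ and $\alpha I-T^*T=(\alpha T^\dagger-T^*)T+\alpha P_{N(T)}$ give (4)$\Leftrightarrow$(5). The paper also implicitly assumes closed range in (5), just as you read it.

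The only difference is how you close the loop. The paper goes from (4) to (1) via \cite[Theorem 3.15]{GRSSS1}, first obtaining $T^*T\in\overline{\mathcal{AN}(H)}$ and then $T\in\overline{\mathcal{AN}(H)}$. You instead get (4)$\Rightarrow$(2) directly from Weyl's theorem, and attach (1) through the singleton-essential-spectrum description of $\overline{\mathcal{AN}(H)}$ plus spectral mapping for $|T|$. Your route is slightly more self-contained and keeps the same $\alpha$ throughout.
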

\begin{proof}
The equivalence of (\ref{ANcloschar1}), (\ref{ANcloschar2}) and (\ref{ANcloschar3}) follows from \cite[Theorem 4.13]{ANHISP}. Next, assume that  (\ref{ANcloschar4}) is true. Then $(\alpha I-T^*T)T^{\dagger}\in \mathcal K(H)$, that is, 
\begin{equation*}
\alpha T^{\dagger}-T^*TT^{\dagger}=\alpha T^{\dagger}-T^*P_{R(T)}=\alpha T^{\dagger}-T^*(I-P_{N(T^*)})=\alpha T^{\dagger}-T^*\in \mathcal K(H).
\end{equation*}
The fact that $N(T)$ is finite-dimensional is clear. The implication (\ref{ANcloschar3}) $\Rightarrow$ (\ref{ANcloschar4}) can be obtained by looking at $T^*T$.

 If (\ref{ANcloschar5}) is true then we have $(\alpha T^{\dagger}-T^*)T\in \mathcal K(H)$ or $\alpha P_{N(T)^\perp}-T^*T=K$ or $\alpha I-T^*T=K+\alpha P_{N(T)}\in \mathcal K(H)$, since $N(T)$ is finite-dimensional. This implies that $T^*T \in \overline{\mathcal  {AN}(H)}^{\|\cdot \|}$ by \cite[Theorem 3.15]{GRSSS1}. But this in turn implies $T \in \overline{\mathcal  {AN}(H)}^{\|\cdot \|}$.
\end{proof}
The following result can be proved very easily using the arguments as in Theorems \ref{essschattenthm} and \ref{schattenpluspi}.

\begin{thm}
Let $T\in \mathcal  B(H)$. The following are equivalent:
\begin{enumerate}
\item $\sigma_{\text{ess}}(T^*T)={\{\alpha}\}$ for some $\alpha > 0$ and $T^*T$ has only finitely many discrete eigenvalues different from $\alpha$, counted with multiplicity
\item there exist $F\in \mathcal F(H)$ and a partial isometry $V$ with $\dim N(V)<\infty$ such that $T=F+\sqrt{\alpha}V$
\item $\alpha I-T^*T\in \mathcal  F(H)$ 
\item $\alpha T^{\dagger}-T^* \in \mathcal  F(H)$ and $N(T)$ is finite-dimensional.
\end{enumerate}
\end{thm}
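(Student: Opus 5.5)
We prove the cycle (1)$\Rightarrow$(2)$\Rightarrow$(3)$\Rightarrow$(4)$\Rightarrow$(3)$\Rightarrow$(1), following the proofs of Theorems~\ref{essschattenthm} and \ref{schattenpluspi} with $\mathcal K_p(H)$ replaced by $\mathcal F(H)$. The only new ingredient is that $\mathcal F(H)$ is a two-sided ideal; no summability or norm estimates are needed.

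\emph{(1)$\Rightarrow$(2).} Set $A:=T^*T-\alpha I$. It is self-adjoint, $\sigma_{\mathrm{ess}}(A)=\{0\}$, and by hypothesis it has only finitely many nonzero eigenvalues counted with multiplicity. We first check that $A$ is finite rank. Every nonzero spectral point of $A$ lies outside $\sigma_{\mathrm{ess}}(A)$, so it is an isolated eigenvalue of finite multiplicity. There are finitely many such points, so $\sigma(A)$ is finite. The spectral theorem then gives $A=\sum_j \mu_j E_j$ over these eigenvalues, and each $E_j$ has finite rank. Hence $A\in\mathcal F(H)$. Next,
\[
|T|-\sqrt\alpha\, I=A(|T|+\sqrt\alpha\, I)^{-1}\in\mathcal F(H),
\]
since $|T|+\sqrt\alpha\, I$ is invertible. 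Writing the polar decomposition $T=V|T|$, we get $T-\sqrt\alpha\, V=V(|T|-\sqrt\alpha\, I)\in\mathcal F(H)$. Finally, $N(V)=N(T)=N(T^*T)$, and this is finite-dimensional because $T^*T=\alpha I+A$ is Fredholm.

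\emph{(2)$\Rightarrow$(3).} As in the proof of Theorem~\ref{schattenpluspi},
\[
T^*T-\alpha I=\sqrt\alpha\,(V^*F+F^*V)+F^*F-\alpha P_{N(V)}.
\]
Each term on the right is finite rank, since $P_{N(V)}$ is finite rank by hypothesis.

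\emph{(3)$\Rightarrow$(4).} Since $T^*T=\alpha I+F$ is Fredholm, $R(T)$ is closed, $T^\dagger$ is bounded, and $N(T)$ is finite-dimensional. Using $T^*TT^\dagger=T^*$, we get $(\alpha I-T^*T)T^\dagger=\alpha T^\dagger-T^*\in\mathcal F(H)$.

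\emph{(4)$\Rightarrow$(3).} Multiplying on the right by $T$ and using $T^\dagger T=P_{N(T)^\perp}$ gives
\[
\alpha I-T^*T=(\alpha T^\dagger-T^*)T+\alpha P_{N(T)}\in\mathcal F(H).
\]

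\emph{(3)$\Rightarrow$(1).} Write $T^*T=\alpha I+F$ with $F=F^*\in\mathcal F(H)$. Weyl's theorem gives $\sigma_{\mathrm{ess}}(T^*T)=\{\alpha\}$. The eigenvalues of $T^*T$ different from $\alpha$ are exactly $\alpha+\mu$, where $\mu$ ranges over the nonzero eigenvalues of $F$, with the same multiplicities. Since $F$ has finite rank, there are only finitely many of these counted with multiplicity.

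The main obstacle is the first step of (1)$\Rightarrow$(2): ensuring that $A$ has finite rank rather than merely being compact. This requires the observation that a self-adjoint operator with finite spectrum, all of whose nonzero spectral points are eigenvalues of finite multiplicity, is a finite linear combination of finite-rank spectral projections. One must also note that $\alpha>0$ implies $0\notin\sigma_{\mathrm{ess}}(T^*T)$; this is what guarantees that the kernel is finite-dimensional. All remaining steps are the algebraic identities already used in Theorem~\ref{schattenpluspi}.
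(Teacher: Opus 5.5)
Your proof is correct and follows the route the paper intends: it reuses the arguments of Theorems~\ref{essschattenthm} and \ref{schattenpluspi} with $\mathcal K_p(H)$ replaced by the ideal $\mathcal F(H)$, and the $\ell^p$-summability step becomes your observation that a self-adjoint operator with finite spectrum, whose nonzero eigenvalues have finite multiplicity, has finite rank. Your explicit (3)$\Rightarrow$(1) step via Weyl's theorem is a useful addition, since the proof of Theorem~\ref{schattenpluspi} only establishes (1)$\Leftrightarrow$(2)$\Rightarrow$(3)$\Leftrightarrow$(4) and never returns from (3) to (1) or (2).
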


Next, we illustrate our main theorem with an example.
\begin{eg}
Let $T=\operatorname{diag}\left\{c+\frac{1}{(n+1)^r}\right\}_{n\ge1}$
on $\ell^2(\mathbb N)$, where $c>0$ and $r>\frac1p$. Then
$T=c I+K,$
where
\[
Ke_n=\frac{1}{(n+1)^r}e_n,\qquad n\ge1.
\]
Since $\sum_{n=1}^{\infty}\frac1{(n+1)^{rp}}<\infty,$ we have $K\in\mathcal K_p(H)$.

Moreover, $T$ is positive and invertible, so $T^\dagger=T^{-1}$. Therefore,
\[
(c^2T^\dagger-T^*)e_n
=
\left(
\frac{c^2}{c+\frac1{(n+1)^r}}
-c
-\frac1{(n+1)^r}
\right)e_n,
\]
for every $n\ge1$. Since
\[
\frac{c^2}{c+t}-c-t
=
-\left(1+\frac{c}{c+t}\right)t,
\]
it follows that
\[
\left|
\frac{c^2}{c+\frac1{(n+1)^r}}
-c
-\frac1{(n+1)^r}
\right|
\le
\frac{2}{(n+1)^r},
\]
for all sufficiently large $n$. Hence $c^2T^\dagger-T^*\in\mathcal K_p(H)$,
which illustrates Theorem \ref{schattenpluspi} (with spectral parameter $c^2$).
\end{eg}

\section{Moore--Penrose Inverses, Isometries, and Factorization Results}
We first describe the behavior of the Moore--Penrose inverse under Schatten-class perturbations of scalar multiples of partial isometries.
\begin{thm}\label{mpperturb}
Let $T\in \mathcal{B}(H)$ and $1\le p<\infty$. Assume that $T=\sqrt{\alpha}\,V+K,$
where $K\in \mathcal K_p(H)$, $\alpha>0$, and $V$ is a partial isometry with
$\dim N(V)<\infty$. Then $T$ has closed range and
\[
T^\dagger
=
\frac{1}{\sqrt{\alpha}}\,V^*+L,
\]
for some $L\in \mathcal K_p(H)$.

In particular,
\[
T^\dagger-\frac{1}{\sqrt{\alpha}}V^*
\in \mathcal K_p(H).
\]
\end{thm}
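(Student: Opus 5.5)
Closed range comes directly from Theorem~\ref{coptperturbpi}, since $K\in\mathcal K_p(H)\subseteq\mathcal K(H)$ and $\sqrt\alpha>0$. In particular $T^\dagger\in\mathcal B(H)$. By Theorem~\ref{schattenpluspi}, implication \eqref{cond2}$\Rightarrow$\eqref{cond4}, we also have
\[
\alpha T^\dagger-T^*\in\mathcal K_p(H).
\]
The plan is to rewrite the desired difference using this fact:
\[
T^\dagger-\tfrac{1}{\sqrt\alpha}V^*=\tfrac1\alpha\bigl(\alpha T^\dagger-T^*\bigr)+\tfrac1\alpha\bigl(T^*-\sqrt\alpha\,V^*\bigr).
\]

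It then remains to check the second term. Since $T=\sqrt\alpha V+K$, we have $T^*-\sqrt\alpha V^*=K^*$. The adjoint of a Schatten $p$-class operator is again in $\mathcal K_p(H)$, because $|K^*|$ and $|K|$ have the same nonzero singular values. So
\[
T^\dagger-\tfrac{1}{\sqrt\alpha}V^*=\tfrac1\alpha\bigl(\alpha T^\dagger-T^*\bigr)+\tfrac1\alpha K^*,
\]
and we set $L$ equal to the right-hand side. This $L$ lies in $\mathcal K_p(H)$ because $\mathcal K_p(H)$ is a linear subspace.

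There is essentially no obstacle once Theorem~\ref{schattenpluspi} is available. The only points needing care are to apply that theorem with parameter $\alpha$ (so the scalar is $\sqrt\alpha$ in \eqref{cond2}) and to note that $\mathcal K_p(H)$ is closed under adjoints.

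If one prefers to avoid the main theorem, a direct route also works. First, $T^*T=\alpha I+\widetilde K$ with $\widetilde K\in\mathcal K_p(H)$, as in \eqref{cptplusid}. Then use $T^\dagger=(T^*T)^\dagger T^*$ (Proposition~\ref{mpirepn}) and show that $(T^*T)^\dagger-\alpha^{-1}I\in\mathcal K_p(H)$. This holds because $T^*T$ restricted to $N(T)^\perp$ is invertible there and differs from $\alpha I$ by a $\mathcal K_p$ term, while $N(T)$ is finite-dimensional. That direct approach is the harder step, which is why I would follow the first route.
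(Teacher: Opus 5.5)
Your proof is correct and matches the paper's argument: you get closed range from Theorem~\ref{coptperturbpi}, take $\alpha T^\dagger-T^*\in\mathcal K_p(H)$ from Theorem~\ref{schattenpluspi}, add $K^*=T^*-\sqrt\alpha\,V^*\in\mathcal K_p(H)$, and divide by $\alpha$. The direct route you sketch, via $(T^*T)^\dagger-\alpha^{-1}I\in\mathcal K_p(H)$, is also valid but is not needed.
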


\begin{proof}
Since $T=\sqrt{\alpha}\,V+K,\; K\in \mathcal K_p(H),$ Theorem~\ref{coptperturbpi} implies that $T$ has closed range. Hence $T^\dagger\in B(H)$. By Theorem~\ref{schattenpluspi}, we have $\alpha T^\dagger-T^*\in \mathcal K_p(H).$ Since $T^*=\sqrt{\alpha}\,V^*+K^*,$ and $K^*\in \mathcal K_p(H)$, it follows that
\[
\alpha T^\dagger-\sqrt{\alpha}\,V^*
=
(\alpha T^\dagger-T^*)
+
K^*
\in \mathcal K_p(H).
\]

Dividing by $\alpha$, we obtain $T^\dagger-\frac{1}{\sqrt{\alpha}}V^*
\in \mathcal K_p(H).$ Therefore there exists $L\in \mathcal K_p(H)$ such that $T^\dagger
= \frac{1}{\sqrt{\alpha}}V^*+L.$

This completes the proof.
\end{proof}

\begin{cor}
Let $T\in \mathcal{B}(H)$ and $1\le p<\infty$. Then the following are equivalent.

\begin{enumerate}
\item
There exist $\alpha>0$, $K\in \mathcal K_p(H)$, and a partial isometry $V$ with
$\dim N(V)<\infty$ such that
\[
T=\sqrt{\alpha}\,V+K.
\]

\item
There exist $\alpha>0$, $L\in \mathcal K_p(H)$, and a partial isometry $V$ with
$\dim N(V)<\infty$ such that
\[
T^\dagger
=
\frac{1}{\sqrt{\alpha}}V^*+L.
\]

\item
\[
\alpha T^\dagger-T^*\in \mathcal K_p(H)
\]
and $N(T)$ is finite-dimensional.
\end{enumerate}
\end{cor}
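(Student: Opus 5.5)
The plan is to prove the cycle (1)$\Rightarrow$(2)$\Rightarrow$(1) and (1)$\Leftrightarrow$(3), using Theorem~\ref{mpperturb} and Theorem~\ref{schattenpluspi}. The implication (1)$\Rightarrow$(3) is exactly Theorem~\ref{schattenpluspi}, namely (\ref{cond2})$\Rightarrow$(\ref{cond4}) there; this also shows that $R(T)$ is closed. For (3)$\Rightarrow$(1) we observe first that $T^\dagger$ is only defined when $R(T)$ is closed. With that hypothesis understood, condition (\ref{cond4}) of Theorem~\ref{schattenpluspi} holds, so (\ref{cond2}) follows.

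For (1)$\Rightarrow$(2), Theorem~\ref{mpperturb} directly gives $T^\dagger=\frac1{\sqrt\alpha}V^*+L$ with $L\in\mathcal K_p(H)$ and the same $V$.

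The remaining implication (2)$\Rightarrow$(1) is the step needing care, because $V^*$ is a partial isometry whose kernel $N(V^*)$ need not be finite-dimensional. I would use the identity $T=(T^\dagger)^\dagger$, valid since $T^\dagger$ has closed range $R(T^*)$. We have
\[
T^\dagger=\tfrac{1}{\sqrt\alpha}\bigl(V^*+\sqrt\alpha L\bigr).
\]
I would then apply Theorem~\ref{coptperturbpi} and Theorem~\ref{mpperturb} to $S:=V^*+\sqrt\alpha L$, which is a $\mathcal K_p$ perturbation of the partial isometry $V^*$. The obstacle is that those results assume the partial isometry has finite-dimensional kernel.

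To get around this, I would show directly that $(V^*+M)^\dagger-V\in\mathcal K_p(H)$ whenever $M\in\mathcal K_p(H)$ and $V^*+M$ has closed range. The route is to compute
\[
S S^*=V^*V+\text{($\mathcal K_p$ term)}=P_{N(V)^\perp}+\text{($\mathcal K_p$ term)}=I+\text{($\mathcal K_p$ term)},
\]
using $\dim N(V)<\infty$. This is the analogue of condition (\ref{cond3}) for $S^*$. Applying Theorem~\ref{schattenpluspi} to the operator $S^*$ (with $\alpha=1$) then gives $S^*=W+\mathcal K_p$ and $(S^*)^\dagger-S\in\mathcal K_p(H)$. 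Taking adjoints and using $(S^\dagger)^*=(S^*)^\dagger$ yields $S^\dagger-S^*\in\mathcal K_p(H)$. Since $S^*=V+\sqrt\alpha L^*$, this gives $S^\dagger=V+\mathcal K_p$.

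Finally, $T=(T^\dagger)^\dagger=\sqrt\alpha\, S^\dagger=\sqrt\alpha V+\mathcal K_p$. This is (1) with the same $V$, whose kernel is finite-dimensional by hypothesis.
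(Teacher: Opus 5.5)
Your argument is correct and is essentially the paper's: for (2)$\Rightarrow$(1) the paper also avoids the possibly infinite-dimensional $N(V^*)$ by passing to the adjoint $(T^\dagger)^*=\frac{1}{\sqrt\alpha}V+L^*$, applies Theorem~\ref{mpperturb} to it (spectral parameter $1/\alpha$), and then uses $((T^\dagger)^*)^\dagger=T^*$, while the other implications are quoted from Theorems~\ref{schattenpluspi} and~\ref{mpperturb} exactly as you do. Your $SS^*$ computation followed by \eqref{cond3}$\Rightarrow$\eqref{cond4} of Theorem~\ref{schattenpluspi} for $S^*=\sqrt\alpha\,(T^\dagger)^*$ simply unpacks that citation, and your remark that $R(T)$ must be closed for $T^\dagger$ to make sense states explicitly a hypothesis the paper leaves implicit.
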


\begin{proof}
The equivalence of (1) and (3) follows from Theorem~\ref{schattenpluspi}, and (1)$\Rightarrow$(2) follows from Theorem~\ref{mpperturb}. For (2)$\Rightarrow$(1), take adjoints to obtain
\[
(T^\dagger)^*=\frac{1}{\sqrt{\alpha}}V+L^*.
\]
Apply Theorem~\ref{mpperturb} to $(T^\dagger)^*$ with spectral parameter $1/\alpha$. Since $\dim N(V)<\infty$, this gives
\[
((T^\dagger)^*)^\dagger=\sqrt{\alpha}\,V^*+K_1
\]
for some $K_1\in\mathcal K_p(H)$. Using $((T^\dagger)^*)^\dagger=((T^\dagger)^\dagger)^*=T^*$ and taking adjoints, we obtain
\[
T=\sqrt{\alpha}\,V+K_1^*,
\]
which is (1).
\end{proof}
\begin{prp}
Let $T \in \mathcal{B}(H)$. Suppose that $T=\alpha V+K,$ where $\alpha>0$, $V$ is an isometry with $\dim N(V^*)<\infty$, and $K$ is compact. Then
$m_e(T)=\alpha$ and $\operatorname{ind}(T-\lambda I)=-\dim N(V^*)\le 0,\; \text{for all}\; |\lambda|<\alpha.$
\end{prp}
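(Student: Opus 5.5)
First I compute $m_e(T)$. Since $V$ is an isometry, $V^*V=I$, so the expansion \eqref{cptplusid} from Theorem~\ref{coptperturbpi} (with $P_{N(V)}=0$) gives
\[
T^*T=\alpha^2 I+\widetilde K,\qquad \widetilde K=K^*K+\alpha(K^*V+V^*K)\in\mathcal K(H),
\]
with $\widetilde K$ self-adjoint. Weyl's theorem then yields $\sigma_{\mathrm{ess}}(T^*T)=\{\alpha^2\}$. Next I pass to $|T|$. The continuous functional calculus commutes with $\pi$, because $\pi$ is a $*$-homomorphism, so $\pi(|T|)=\pi(T^*T)^{1/2}$. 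The spectral mapping theorem in the Calkin algebra then gives
\[
\sigma_{\mathrm{ess}}(|T|)=\{\sqrt{\lambda}:\lambda\in\sigma_{\mathrm{ess}}(T^*T)\}=\{\alpha\}.
\]
Hence $m_e(T)=\alpha$.

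For the index, I first look at $\lambda=0$. The isometry $V$ has closed range and $N(V)=\{0\}$, so it is Fredholm with $\operatorname{ind}(V)=0-\dim N(V^*)=-\dim N(V^*)$; this is finite by hypothesis. Since $T=\alpha V+K$ and $\alpha>0$, the compact-perturbation invariance of the index gives $\operatorname{ind}(T)=\operatorname{ind}(\alpha V)=-\dim N(V^*)$.

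For general $|\lambda|<\alpha$, I write $T-\lambda I=(\alpha V-\lambda I)+K$, which reduces the problem to showing that $\alpha V-\lambda I$ is Fredholm with index $-\dim N(V^*)$. For every $x$,
\[
\|(\alpha V-\lambda I)x\|\ge(\alpha-|\lambda|)\|x\|,
\]
so $\alpha V-\lambda I$ is bounded below; in particular it is injective with closed range. The family $t\mapsto \alpha V-t\lambda I$, $t\in[0,1]$, is a norm-continuous path of operators, each bounded below by $\alpha-t|\lambda|>0$. The only point still to verify is that each operator on this path has finite-dimensional cokernel, so that the whole path stays in the Fredholm operators. For that I use that the set of semi-Fredholm operators is open and that the index, allowed to take values in $\mathbb Z\cup\{\pm\infty\}$, is locally constant on it. Along the path the index therefore stays equal to its value at $t=0$, namely $-\dim N(V^*)$, which is finite. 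Consequently every $\alpha V-t\lambda I$ is Fredholm, and at $t=1$ we get $\operatorname{ind}(\alpha V-\lambda I)=-\dim N(V^*)$.

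Adding back the compact operator $K$ then gives
\[
\operatorname{ind}(T-\lambda I)=\operatorname{ind}(\alpha V-\lambda I)=-\dim N(V^*)\le0.
\]
The main obstacle is this homotopy step: the lower bound $\alpha-|\lambda|>0$ alone gives only left semi-Fredholmness, and finiteness of the cokernel comes from continuity of the semi-Fredholm index. An alternative route that avoids this is the Wold decomposition $V\cong S^{(d)}\oplus U$ with $d=\dim N(V^*)$. Here $\alpha U-\lambda$ is invertible for $|\lambda|<\alpha$, and $\alpha S-\lambda$ has index $-1$ by the standard computation for the unilateral shift $S$.
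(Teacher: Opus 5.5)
Your proof is correct. For $m_e(T)$ it is the paper's argument: the paper passes to $|\pi(T)|=\alpha I$ in the Calkin algebra, and you make explicit the identity $\pi(|T|)=\pi(T^*T)^{1/2}$ that this step tacitly uses.

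For the index, your main route is genuinely different. The paper uses the Wold decomposition $V=U\oplus S_n$, with $U-\mu I$ invertible and $\operatorname{ind}(S_n-\mu I)=-n$ for $|\mu|<1$; this is exactly your fallback. You instead deform $\alpha V$ to $\alpha V-\lambda I$ through bounded-below operators and invoke Kato's stability theorem, namely openness of the semi-Fredholm operators and local constancy of the $\mathbb Z\cup\{\pm\infty\}$-valued index. Your argument avoids the structure theory of isometries but rests on the heavier semi-Fredholm stability theorem. The Wold route is more concrete and needs only the explicit computation for the unilateral shift.

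You are also right that the lower bound $(\alpha-|\lambda|)\|x\|$ yields only left semi-Fredholmness. The paper states that $V-\frac{\lambda}{\alpha}I$ is Fredholm before justifying it, and only the later Wold decomposition supplies the finite cokernel. An even shorter way to close this step is the factorization $V-\mu I=(I-\mu V^*)V$, valid since $V^*V=I$. Here $I-\mu V^*$ is invertible for $|\mu|<1$ by the Neumann series, so this gives Fredholmness and $\operatorname{ind}(V-\mu I)=\operatorname{ind}(V)=-\dim N(V^*)$ at once.
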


\begin{proof}
Since $V^*V=I$, we have
$T^*T=(\alpha V^*+K^*)(\alpha V+K)=\alpha^2I+\alpha(V^*K+K^*V)+K^*K.$
Therefore, $T^*T-\alpha^2I$ is compact. Passing to the Calkin algebra yields
$\pi(T)^*\pi(T)=\alpha^2I.$ Hence $|\pi(T)|=\alpha I.$ It follows that $\sigma_e(|T|)=\{\alpha\},$ and consequently $m_e(T)=\inf \sigma_e(|T|)=\alpha.$

Now let $|\lambda|<\alpha$. Since $T-\lambda I=(\alpha V-\lambda I)+K,$ it suffices to compute the index of $\alpha V-\lambda I$. Observe that
$\alpha V-\lambda I=\alpha\left(V-\frac{\lambda}{\alpha}I\right).$
Since $\left|\frac{\lambda}{\alpha}\right|<1$, the operator $V-\frac{\lambda}\alpha I$ is Fredholm. 

Next, we claim that $\operatorname{ind}\left(V-\frac{\lambda}{\alpha}I\right)=
-\dim N(V^*).$
To this end, since $V$ is an isometry and $|\mu|<1$,
\[
\|(V-\mu I)x\|
\ge (1-|\mu|)\|x\|,
\qquad x\in H.
\]
Hence $\ker(V-\mu I)=\{0\}$.

By the Wold decomposition theorem,
\[
V=U\oplus S_n,
\qquad
n=\dim N(V^*),\]
where $U$ is unitary and $S_n$ is the unilateral shift of multiplicity
$n$. Since $U-\mu I$ is invertible and
$\operatorname{ind}(S_n-\mu I)=-n,$
we obtain $\operatorname{ind}(V-\mu I) =-n=-\dim N(V^*).$
Therefore,
\[
-\dim N((V-\mu I)^*)
=\operatorname{ind}(V-\mu I)
=-\dim N(V^*),
\]
which proves that
\[
\dim N((V-\mu I)^*)
=
\dim N(V^*).\]
Therefore,
$\operatorname{ind}(\alpha V-\lambda I)=-\dim N(V^*)\le 0.$

Since compact perturbations preserve the Fredholm index,
\[
\operatorname{ind}(T-\lambda I)
=
\operatorname{ind}(\alpha V-\lambda I)
=
-\dim N(V^*)
\le 0.
\]
This completes the proof. 
\end{proof}
We recall the following result, which is useful for our further discussion.
\begin{thm}\label{cptisometricperturb}\cite[Theorem 2.4]{IoanaTurcu}
 Let $V\in \mathcal B(H)$ be an isometry and $T \in \mathcal B(H)$ be a contraction. Then $T-V \in \mathcal K(H)$ if and only if there is a contraction $Y \in \mathcal B(H)$  such that $Y - I\in \mathcal K(H)$ and $T = YV$.
\end{thm}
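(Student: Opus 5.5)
The implication from right to left is immediate. If $T=YV$ with $Y-I\in\mathcal K(H)$, then $T-V=(Y-I)V$, which is compact because $\mathcal K(H)$ is an ideal.

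For the forward implication, assume $T-V\in\mathcal K(H)$. The naive choice $Y=TV^*$ satisfies $YV=T$ and is a contraction. However, $Y-I=(T-V)V^*-P_{N(V^*)}$, and this need not be compact when $N(V^*)$ is infinite-dimensional. I will therefore correct $TV^*$ on $N(V^*)$ by a defect term. Write $P:=P_{N(V^*)}=I-VV^*$ and $D:=(I-TT^*)^{1/2}$; the latter is well defined since $T$ is a contraction. Set
\[
Y:=TV^*+DP .
\]
Since $PV=0$ and $V^*V=I$, we get $YV=T$.

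\emph{Contractivity.} Factor $Y$ as a row operator times a column operator:
\[
Y=\begin{pmatrix}T & D\end{pmatrix}\begin{pmatrix}V^*\\ P\end{pmatrix}.
\]
The column map $x\mapsto(V^*x,Px)$ from $H$ to $H\oplus H$ is an isometry, because $\|V^*x\|^2+\|Px\|^2=\langle (VV^*+P)x,x\rangle=\|x\|^2$. The row map $H\oplus H\to H$ satisfies
\[
\begin{pmatrix}T & D\end{pmatrix}\begin{pmatrix}T & D\end{pmatrix}^*=TT^*+D^2=I,
\]
so it is a co-isometry and in particular a contraction. Hence $\|Y\|\le 1$.

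\emph{Compactness of $Y-I$.} We have
\[
Y-I=(T-V)V^*+VV^*-I+DP=(T-V)V^*+(D-I)P ,
\]
so it suffices to show that $D-I$ is compact on $R(P)$, i.e. that $(D-I)P\in\mathcal K(H)$. This is the step I expect to need the most care, since $D$ is defined through a square root. Pass to the Calkin algebra. From $T-V\in\mathcal K(H)$ we get $\pi(T)=\pi(V)$, so $\pi(I-TT^*)=\pi(I-VV^*)=\pi(P)$. Since $\pi$ is a $*$-homomorphism, it commutes with the continuous functional calculus; applying $t\mapsto t^{1/2}$ gives $\pi(D)=\pi(P)^{1/2}=\pi(P)$, because $\pi(P)$ is a projection. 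Therefore $\pi\bigl((D-I)P\bigr)=(\pi(P)-I)\pi(P)=0$, that is, $(D-I)P\in\mathcal K(H)$.

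Combining the two terms, $Y-I\in\mathcal K(H)$. Thus $Y$ is a contraction with $Y-I$ compact and $T=YV$, which completes the plan.
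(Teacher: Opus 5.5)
Your proof is correct and complete. $YV=T$ holds because $PV=0$ and $V^*V=I$. Factoring $Y$ through the isometric column $x\mapsto(V^*x,Px)$ and the co-isometric row $\begin{pmatrix}T & D\end{pmatrix}$ (the first row of the Halmos unitary dilation of $T$) gives $\|Y\|\le 1$. The Calkin-algebra step $\pi(D)=\pi(P)^{1/2}=\pi(P)$ is legitimate, since unital $*$-homomorphisms commute with the continuous functional calculus.

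The paper gives no proof of this statement; it is quoted from Ioana--Turcu. So the useful comparison is with the paper's own Moore--Penrose construction. Theorem~\ref{genfactorization} with $A=V$, $B=T$ produces $X=TV^*+P$. For this $X$, $X-I=(T-V)V^*$ is compact for free. However, $X$ is a contraction exactly when $R(T)\subseteq R(V)$: test $X$ on $x=tPTw+Vw$ with $t\to\infty$, which gives $\|Xx\|^2-\|x\|^2=\|Tw\|^2-\|w\|^2+2t\|PTw\|^2$. That inclusion is what makes $TV^*+\alpha P_{R(V)^\perp}$ contractive in the corollary following this theorem, where $V$ is the polar partial isometry of $T$. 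In general, your replacement of $P$ by $DP$ is exactly the fix, at the price of a square root.

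You can also remove the Calkin algebra from the square-root step. Since $I+D$ is invertible and commutes with $D$, and $T^*P=(T-V)^*P$ because $V^*P=0$, we have
\[
(D-I)P=(I+D)^{-1}(D^2-I)P=-(I+D)^{-1}TT^*P=-(I+D)^{-1}T(T-V)^*P,
\]
and hence
\[
Y-I=(T-V)V^*-(I+D)^{-1}T(T-V)^*P .
\]
This is the same device as $|T|-\alpha I=A(|T|+\alpha I)^{-1}$ in Theorem~\ref{essschattenthm}. It shows that your $Y$ satisfies $Y-I\in\mathcal K_p(H)$ (respectively $\mathcal F(H)$) whenever $T-V\in\mathcal K_p(H)$ (respectively $\mathcal F(H)$). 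So your construction yields a contractive Schatten-class version of the factorization, which the paper's formula $X=BA^\dagger+P_{R(A)^\perp}$ does not provide.
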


\begin{cor}

Let $T\in \mathcal  B(H)$ be a contraction. Then the following conditions are equivalent:

\begin{enumerate}
\item $T \in \overline{\mathcal  {AN}(H)}^{\|\cdot \|}$
\item\label{anFactorization} There exists a partial isometry $V$ with finite-dimensional kernel, a contraction $Y\in \mathcal B(H)$, and a real number $\alpha\geq 0$ such that  $Y-\alpha I\in \mathcal  K(H)$  and $T=YV$. Also, $Y\in \overline{\mathcal {AN}(H)}^{\|\cdot\|}$.
\end{enumerate}
\end{cor}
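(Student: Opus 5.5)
The plan is to derive both directions from the compact-perturbation characterization (the theorem following Theorem~\ref{essfrankthm}, whose equivalent conditions include (\ref{ANcloschar1}) and (\ref{ANcloschar3})) combined with Theorem~\ref{cptisometricperturb}. The degenerate case $T\in\mathcal K(H)$ is handled separately: then $T\in\overline{\mathcal{AN}(H)}$, since compact operators are absolutely norm attaining. For (2) take $\alpha=0$, $V=I$, $Y=T$. Then $Y$ is a contraction, $Y-0\cdot I=T$ is compact, and $Y=T$ lies in the closure. Conversely, if $\alpha=0$ in (2), then $T=YV$ with $Y$ compact, so $T$ is compact and (1) holds. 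From now on $T\notin\mathcal K(H)$ and $\alpha>0$.

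For (1)$\Rightarrow$(2), the compact-perturbation characterization gives $T=\sqrt{\beta}W+K$ with $W$ a partial isometry, $\dim N(W)<\infty$, $K$ compact, and $\sigma_{\mathrm{ess}}(T^*T)=\{\beta\}$. I would avoid Theorem~\ref{cptisometricperturb}, which needs an isometry, and construct $Y$ directly from the polar decomposition $T=V|T|$. Here $V$ is a partial isometry with $N(V)=N(T)$ finite-dimensional, and $|T|-\sqrt\beta I$ is compact, as in the proof of Theorem~\ref{essschattenthm}. Set
\[
Y:=V|T|V^*+\sqrt\beta\,P_{N(V^*)}.
\]
Then $YV=V|T|V^*V=V|T|P_{N(T)^\perp}=V|T|=T$. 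Also $Y-\sqrt\beta I=V(|T|-\sqrt\beta I)V^*$ is compact, because $VV^*+P_{N(V^*)}=I$. Take $\alpha:=\sqrt\beta$.

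The main obstacle is checking that $Y$ is a contraction. The operator $V|T|V^*$ is positive with norm $\le\|T\|\le1$, and it is supported on $R(V)$. Hence $Y$ is an orthogonal direct sum of a positive contraction on $R(V)$ and $\sqrt\beta\,I$ on $N(V^*)$. Since $\beta\in\sigma_{\mathrm{ess}}(T^*T)\subseteq[0,\|T\|^2]$, we get $\sqrt\beta\le1$, so $\|Y\|\le1$. Finally, $Y$ is positive with $\sigma_{\mathrm{ess}}(Y^*Y)=\sigma_{\mathrm{ess}}(Y^2)=\{\beta\}$ by Weyl's theorem, so the compact-perturbation characterization gives $Y\in\overline{\mathcal{AN}(H)}$.

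For (2)$\Rightarrow$(1), write $Y=\alpha I+K$ with $K$ compact, so that $T=\alpha V+KV$. Here $KV$ is compact and $V$ is a partial isometry with finite-dimensional kernel. Condition (\ref{ANcloschar3}) of the compact-perturbation characterization then holds with $\sqrt{\alpha^2}$ in place of $\sqrt\alpha$, and that theorem yields $T\in\overline{\mathcal{AN}(H)}$. The assumption $Y\in\overline{\mathcal{AN}(H)}$ is not needed for this direction.
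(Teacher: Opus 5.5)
Your proof is correct. In the direction (1)$\Rightarrow$(2) it is essentially the paper's argument: since $TV^*=V|T|V^*$ and $P_{R(V)^\perp}=P_{N(V^*)}$, your $Y$ is exactly the paper's $Y=TV^*+\alpha P_{R(V)^\perp}$. The contraction argument, via the splitting $H=R(V)\oplus N(V^*)$, is also the same.

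You improve on the paper there in two ways. First, you treat $T\in\mathcal K(H)$ separately, with $V=I$, $Y=T$, $\alpha=0$. This is genuinely needed: for compact $T$ with infinite-dimensional kernel (e.g.\ $T$ of finite rank), the polar partial isometry has $N(V)=N(T)$ infinite-dimensional, so the paper's claim $\dim N(V)<\infty$ fails in that case. Second, you actually verify $Y\in\overline{\mathcal{AN}(H)}$, using positivity of $Y$ and Weyl's theorem, which the paper leaves implicit.

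The direction (2)$\Rightarrow$(1) takes a genuinely different route. The paper shows that $Y$ and $V$ each lie in $\overline{\mathcal{AN}(H)}$, citing external results, and then uses closure of $\overline{\mathcal{AN}(H)}$ under products. You instead expand $T=\alpha V+(Y-\alpha I)V$ and read off the compact-perturbation condition of the characterization theorem directly, disposing of $\alpha=0$ by compactness of $T$. Your version is more self-contained, and it shows that the hypotheses $Y\in\overline{\mathcal{AN}(H)}$ and $\|Y\|\le 1$ are not needed for the converse. The paper's version is shorter but depends on the nontrivial product-closure theorem.
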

\begin{proof}
Assume first that $T\in\overline{\mathcal{AN}(H)}$, and let $T=V|T|$ be its polar decomposition. By Theorem~\ref{ANcloschar}, there is $\alpha\geq0$ such that $|T|-\alpha I\in\mathcal K(H)$ and $\dim N(V)<\infty$. Define
\[
Y:=TV^*+\alpha P_{R(V)^\perp}.
\]
Since $V^*V=P_{N(V)^\perp}$ and $N(T)=N(V)$, we have $YV=T$. Moreover,
\[
Y-\alpha I=(T-\alpha V)V^*\in\mathcal K(H).
\]
Because $T$ is a contraction, $\alpha\leq1$. With respect to the orthogonal decomposition $H=R(V)\oplus R(V)^\perp$, the operator $Y$ acts as $TV^*$ on $R(V)$ and as $\alpha I$ on $R(V)^\perp$; hence $\|Y\|\leq1$.

Conversely, assume \eqref{anFactorization}. Since $Y-\alpha I\in\mathcal K(H)$, \cite[Proposition 3.5]{GRSSS1} gives $Y\in \overline{\mathcal  {AN}(H)}^{\|\cdot \|}$. Also, by \cite[Remark 4.12]{ANHISP}, $V\in \overline{\mathcal  {AN}(H)}^{\|\cdot \|}$. Since $\overline{\mathcal{AN}(H)}$ is closed under products \cite[Theorem 3.12]{GRSSS1}, we conclude that $T=YV\in \overline{\mathcal  {AN}(H)}^{\|\cdot \|}$.
\end{proof}
Next, we look at compact perturbation result for general operators.
\begin{thm}\label{genfactorization}
Let $A,B\in \mathcal B(H)$, assume that $R(A)$ is closed, and suppose that $N(A)\subseteq N(B)$. Then the following statements are equivalent:
\begin{enumerate}
\item\label{compactDiff} $A-B\in \mathcal K(H)$
\item\label{compactFactorization} there exists $X\in \mathcal B(H)$ with $X-I\in \mathcal K(H)$ such that $B=XA$. 
\end{enumerate}
\end{thm}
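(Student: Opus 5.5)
The plan is to use the Moore--Penrose inverse $A^\dagger$, which is a bounded operator because $R(A)$ is closed. Recall that $A^\dagger A=P_{N(A)^\perp}$ and $AA^\dagger=P_{R(A)}$. The hypothesis $N(A)\subseteq N(B)$ gives $B P_{N(A)}=0$, and therefore
\[
B=BP_{N(A)^\perp}=BA^\dagger A .
\]
This identity is the only place where the kernel inclusion is used. It also suggests the factorizing operator $X$ should be $BA^\dagger$, corrected on $R(A)^\perp$ so that it becomes a compact perturbation of $I$.

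\emph{(\ref{compactFactorization}) $\Rightarrow$ (\ref{compactDiff}).} This direction is immediate. If $B=XA$ with $X-I\in\mathcal K(H)$, then
\[
A-B=(I-X)A\in\mathcal K(H),
\]
because $\mathcal K(H)$ is a two-sided ideal. Neither the closed-range hypothesis nor the kernel inclusion is needed here.

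\emph{(\ref{compactDiff}) $\Rightarrow$ (\ref{compactFactorization}).} Set
\[
X:=BA^\dagger+P_{R(A)^\perp}=BA^\dagger+(I-AA^\dagger).
\]
\begin{itemize}
\item \textbf{Factorization.} Since $AA^\dagger A=A$, we have $(I-AA^\dagger)A=0$. Combined with $B=BA^\dagger A$ from above, this gives $XA=BA^\dagger A=B$.
\item \textbf{Compactness.} We compute
\[
X-I=BA^\dagger-AA^\dagger=(B-A)A^\dagger ,
\]
which is compact because $B-A\in\mathcal K(H)$ and $A^\dagger\in\mathcal B(H)$.
\end{itemize}
This gives an explicit factorizing operator, in line with the paper's stated aim of a Moore--Penrose proof with an explicit formula.

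\emph{Main obstacle.} The only delicate step is making sure the added term on $R(A)^\perp$ does not spoil either property. It must vanish on $R(A)$ so that $XA=B$ is preserved. It must also contribute exactly the identity on $R(A)^\perp$, so that $X-I$ reduces to $(B-A)A^\dagger$. The projection $I-AA^\dagger$ does both. One also needs boundedness of $A^\dagger$, which is exactly what closedness of $R(A)$ provides. Everything else is routine algebra with the four Penrose equations. As a remark, the same formula shows that membership in $\mathcal K_p(H)$ or $\mathcal F(H)$ transfers as well, since $X-I=(B-A)A^\dagger$.
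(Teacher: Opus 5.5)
Your proof is correct and matches the paper's argument: the same explicit factor $X=BA^\dagger+P_{R(A)^\perp}$, the identity $BA^\dagger A=BP_{N(A)^\perp}=B$ from the kernel inclusion, and $X-I=(B-A)A^\dagger$, with the converse via $B-A=(X-I)A$. Your closing remark about $\mathcal K_p(H)$ is also exactly how the paper obtains its Schatten-class analogue.
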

\begin{proof}
(\ref{compactDiff}) $\Rightarrow$(\ref{compactFactorization}): Since $R(A)$ is closed, $A^{\dagger}$ is bounded. Define $X=BA^{\dagger}+P_{R(A)^{\bot}}$. Then we have $XA=BA^{\dagger}A=B(I-P_{N(A)})=B$ as $N(A)\subseteq N(B)$.  Also, $X-I=BA^{\dagger}-P_{R(A)}=BA^{\dagger}-AA^{\dagger}=(B-A)A^{\dagger}=KA^{\dagger}\in \mathcal K(H)$. 

Conversely, if $B=XA$ and $X-I\in\mathcal K(H)$, then $B-A=(X-I)A\in\mathcal K(H)$. 
\end{proof}
\begin{rmk}
Theorem \ref{genfactorization} was established in \cite{SerbanTurcu}. The proof presented here yields the explicit factorization
\[
X=BA^\dagger+P_{R(A)^\perp},
\]
and provides a direct argument based on the Moore--Penrose inverse. On the other hand, the operator $X$ constructed in \cite[Theorem 4]{SerbanTurcu} satisfies the stronger property
\[
N(X)=A\bigl(R(A^*)\cap N(B)\bigr)
    =A\bigl(N(A)^\perp\cap N(B)\bigr),
\]
whereas for our choice of $X$ we only have
\[
A\bigl(R(A^*)\cap N(B)\bigr)\subseteq N(X).
\]
Thus, the approach in \cite{SerbanTurcu} yields additional information on the kernel of the factorizing operator, while our proof is shorter and avoids the use of dilation theory.
\end{rmk}

Using the same argument as in the proof of Theorem \ref{genfactorization}, we obtain the following Schatten-class analogue.
\begin{thm}
Let $A,B\in \mathcal B(H)$, assume that $R(A)$ is closed, and suppose that $N(A)\subseteq N(B)$. Then the following statements are equivalent:
\begin{enumerate}
\item\label{schattenDiff} $A-B\in \mathcal K_{p}(H),\; 1\leq p<\infty$
\item\label{schattenFactorization} there exists $X\in \mathcal B(H)$ with $X-I\in \mathcal K_p(H),\; 1\leq p<\infty$ such that $B=XA$. 
\end{enumerate}
\end{thm}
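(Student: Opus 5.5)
I will follow the proof of Theorem~\ref{genfactorization} verbatim and replace the ideal $\mathcal K(H)$ by $\mathcal K_p(H)$ throughout. The only structural fact needed is that $\mathcal K_p(H)$ is a two-sided ideal in $\mathcal B(H)$: for $K\in\mathcal K_p(H)$ and $S\in\mathcal B(H)$, both $KS$ and $SK$ lie in $\mathcal K_p(H)$.

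For (\ref{schattenDiff})$\Rightarrow$(\ref{schattenFactorization}), the range $R(A)$ is closed, so $A^\dagger\in\mathcal B(H)$. I define
\[
X:=BA^\dagger+P_{R(A)^\perp}.
\]
To check the factorization, I use $A^\dagger A=P_{N(A)^\perp}=I-P_{N(A)}$ and $P_{R(A)^\perp}A=0$. Together with $N(A)\subseteq N(B)$, which gives $BP_{N(A)}=0$, these yield $XA=B$.

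To check that $X$ is a $\mathcal K_p$-perturbation of the identity, I use $AA^\dagger=P_{R(A)}$. This gives
\[
X-I=BA^\dagger-P_{R(A)}=BA^\dagger-AA^\dagger=(B-A)A^\dagger.
\]
This operator is a product of an element of $\mathcal K_p(H)$ with a bounded operator, so it lies in $\mathcal K_p(H)$ by the ideal property.

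For (\ref{schattenFactorization})$\Rightarrow$(\ref{schattenDiff}), I simply write
\[
B-A=XA-A=(X-I)A\in\mathcal K_p(H),
\]
again by the ideal property.

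There is no real obstacle here. The only points needing care are the Moore--Penrose identities $A^\dagger A=P_{N(A)^\perp}$ and $AA^\dagger=P_{R(A)}$, and these hold because $R(A)$ is closed. The ideal property of $\mathcal K_p(H)$ holds for each fixed $1\le p<\infty$, since $s_n(KS)\le\|S\|\,s_n(K)$. One should also note that the same $p$ is used in both conditions.
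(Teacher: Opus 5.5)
Your proposal is correct and matches the paper's proof: the paper uses the same factorizing operator $X=BA^\dagger+P_{R(A)^\perp}$ and the identity $X-I=(B-A)A^\dagger$ for the forward direction. For the converse it uses $B-A=(X-I)A$, with both directions resting on the two-sided ideal property of $\mathcal K_p(H)$.
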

\begin{proof}
If $A-B\in\mathcal K_p(H)$, define $X=BA^\dagger+P_{R(A)^\perp}$ as in Theorem~\ref{genfactorization}. Then $B=XA$ and
\[
X-I=(B-A)A^\dagger\in\mathcal K_p(H).
\]
Conversely, if $B=XA$ and $X-I\in\mathcal K_p(H)$, then
\[
B-A=(X-I)A\in\mathcal K_p(H).\qedhere
\] 
\end{proof}
\section*{Data Availability}
Data sharing is not applicable to this article as no datasets were generated or analyzed during the current study.

\section*{Declaration of Generative AI and AI-Assisted Technologies in the Manuscript Writing Process}
The authors used ChatGPT (OpenAI) to assist with improving the language and polishing the grammar.

\end{document}